\definecolor{webgreen}{rgb}{0,.5,0}
\definecolor{webbrown}{rgb}{.6,0,0}
\theoremstyle{plain}
\newtheorem{theorem}{Theorem}
\numberwithin{theorem}{section}
\newtheorem{corollary}[theorem]{Corollary}
\newtheorem{lemma}[theorem]{Lemma}
\newtheorem{proposition}[theorem]{Proposition}
\newtheorem{conjecture}[theorem]{Conjecture}
\newtheorem{prop}[theorem]{Proposition}
\newtheorem{question}[theorem]{Question}
\theoremstyle{definition}
\newtheorem{definition}[theorem]{Definition}
\newtheorem{example}[theorem]{Example}
\theoremstyle{remark}
\newtheorem*{remark*}{Remark}
\newcommand{\ol}{\overline}
\newcommand{\tn}{\textnormal}
\newcommand{\m}{\ol{\widetilde{m}}}
\newcommand{\tb}{\textbf}
\newcommand{\tcb}{\textcolor{blue}}
\newcommand{\se}{\subseteq}
\title{On graphs with equal and different Kromatic symmetric functions}
\author[1]{Laura Pierson}
\author[2]{Soham Samanta}
\affil[1]{\href{mailto:lcpierson73@gmail.com}{lcpierson73@gmail.com}}
\affil[2]{\href{mailto:soham2020sam@gmail.com}{soham2020sam@gmail.com}}
\begin{document}

\maketitle

\begin{abstract}

    The \emph{Kromatic symmetric function} (KSF) $\ol{X}_G$ of a graph $G$ is a $K$-analogue introduced by Crew, Pechenik, and Spirkl in \cite{crew2023kromatic} of Stanley's chromatic symmetric function (CSF) $X_G$. The KSF is known to distinguish some pairs of graphs with the same CSF. The first author showed in \cite{pierson2024counting} and \cite{new_p_expansion} that the number of copies in $G$ of certain induced subgraphs can be determined given $\ol{X}_G$, and conjectured that $\ol{X}_G$ distinguishes all graphs. We disprove that conjecture by finding four pairs of 8-vertex graphs with equal KSF, as well as giving several ways to use existing graph pairs with equal KSF to construct larger graph pairs that also have equal KSF. On the other hand, we show that many of the graph pairs from the constructions in \cite{orellana2014graphs} and \cite{aliste2021vertex} of graphs with the same CSF are distinguished by the KSF, thus also giving some new examples of cases where the KSF is a stronger invariant than the CSF.
\end{abstract}

\section{Introduction}

The chromatic symmetric function (CSF) is a well known generalization of the chromatic polynomial, introduced by Richard Stanley in \cite{stanley1995symmetric}. For weighted graphs, the CSF is defined as follows:
\begin{definition}[Stanley \cite{stanley1995symmetric}; Crew-Spirkl \cite{crew2020deletion} for weighted version]
    Let $G = (V,E)$ be a finite simple graph with vertex set $V$, together with a weight function $w:G\to \mathbb{P}$, where $\mathbb{P}$ is the set of positive integers. A \emph{\tb{\tcb{proper coloring}}} of $G$ is a function $\kappa : V \to \mathbb{P}$ such that $\kappa(u) \neq \kappa(v)$ whenever $uv \in E$. The \emph{\tb{\tcb{chromatic symmetric function (CSF)}}}  of the pair $(G,w)$, denoted $X_{(G,w)}(\mathbf{x})$ or simply $X_{(G,w)}$, is defined as:
    \[
    X_{(G,w)}(\mathbf{x}) := \sum_{\kappa} \prod_{v \in V} x_{\kappa(v)}^{w(v)},
    \]
    where the sum is over all proper colorings $\kappa$ of $G$, and $\mathbf{x} = (x_1, x_2, x_3, \dots)$ is a countably infinite set of commuting variables, one assigned to each color. When no weight function is specified, we will assume all vertices have weight 1 and write simply $X_G$.
\end{definition}

Stanley \cite{stanley1995symmetric} remarked that he was unsure if any two nonisomorphic tree exist with the same CSF. Since his paper was published, a number of authors have studied which pairs of graphs are distinguished by the CSF and worked toward trying to prove that it distinguishes all trees. The CSF is known to distinguish all trees on up to 29 vertices~\cite{heil2018algorithm}. It is also known to distinguish all graphs in several infinite families of trees ~\cite{morin2005caterpillars,aliste2014proper,martin2008distinguishing}, as well as in a family of unicyclic graphs called squids~\cite{martin2008distinguishing}. Various properties of a tree $T$ are also known to be computable from $X_T$, including the subtree polynomial, the path sequence, and the degree sequence~\cite{martin2008distinguishing}. For non-trees, Orellana and Scott \cite{orellana2014graphs} proved that $X_G$ also determines the girth of $G$ and the number of triangles. 

Although it is still open whether or not the CSF distinguishes all \emph{trees}, it is known that there do exist nonisomorphic \emph{graphs} with the same CSF. In particular, Orellana and Scott \cite{orellana2014graphs} and Aliste-Prieto, Crew, Spirkl, and Zamora \cite{aliste2021vertex} construct two different infinite families of pairs of graphs with equal CSF. We will investigate those constructions in \S \ref{sec:orellana-scott} and \S \ref{sec:split_graphs}, respectively.

Our main focus here is the following $K$-theoretic analogue of $X_G$, introduced by Crew, Pechenik, and Spirkl in \cite{crew2023kromatic}:

\begin{definition}[Crew-Pechenik-Spirkl ~\cite{crew2023kromatic}]
A \emph{proper set coloring} of $G$ is a function $\kappa : V \to 2^{\mathbb{P}} \setminus \{\varnothing\}$ such that $\kappa(u) \cap \kappa(v) = \varnothing$ whenever $uv \in E$, so each vertex receives a nonempty set of colors such that adjacent vertices receive nonoverlapping color sets. The \emph{\tb{\tcb{Kromatic symmetric function (KSF)}}} of $(G,w)$ is
\[
\overline{X}_{(G,w)} := \sum_{\kappa} \prod_{v \in V}  \left(\prod_{i \in \kappa(v)} x_i\right)^{w(v)}.
\]
That is, each monomial corresponds to a proper set coloring of $G$, with the exponent on each color variable being the sum of the weights of vertices receiving that color. Again, if no weight function is specified, we will assume all vertices have weight 1.
\end{definition}

Part of the motivation for defining $\ol{X}_G$ came from $K$-theory, the study of $K$-rings associated to topological spaces, which are a deformation of cohomology rings. It is unknown whether $X_G$ and $\ol{X}_G$ actually have a topological interpretation related to $K$-theory, but the idea was to help shed light on whether they might, and whether such an interpretation might be helpful for proving the Stanley-Stembridge conjecture (which has since been proven by Hikita \cite{hikita2024proof} using different methods). Whether or not such a topological interpretation exists, however, $\ol{X}_G$ is still interesting combinatorially. In particular, it has a natural interpretation in terms of Hopf algebras \cite{marberg2023kromatic} and it has nice expansion formulas using $K$-analogues of the Schur functions $s_\lambda$ \cite{crew2023kromatic}, the elementary symmetric functions $e_\lambda$ \cite{marberg2023kromatic}, and the power sum symmetric functions $p_\lambda$ \cite{new_p_expansion}.

Another reason $\ol{X}_G$ is interesting is that it contains more information about $G$ than $X_G$ does, since its lowest degree terms match $X_G$ but it also has additional higher degree terms. The authors of \cite{crew2023kromatic} gave some examples of graphs with the same $X_G$ but different $\ol{X}_G$, and asked what else can be learned about which graphs are distinguished by $\ol{X}_G$. \cite{new_p_expansion} gives the following characterization of the information contained in $\ol{X}_G$:

\begin{theorem}[Pierson \cite{new_p_expansion}, Corollary 1.6]\label{thm:ind_poly}
    Knowing $\ol{X}_G$ is equivalent to knowing the multiset of independence polynomials of induced subgraphs of $G$.
\end{theorem}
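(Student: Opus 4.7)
The plan is to realize each proper set coloring as a sequence of independent sets covering $V(G)$, use inclusion-exclusion to rewrite $\ol{X}_G$ in terms of independence polynomials of induced subgraphs, and then invert that identity.

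First I would reindex: a proper set coloring $\kappa$ is the same data as a sequence $(S_c)_{c\geq 1}$ of independent sets of $G$ with $\bigcup_c S_c = V$, via $S_c = \{v : c \in \kappa(v)\}$. Thus
\[
\ol{X}_G = \sum_{\substack{(S_c)_{c\ge 1} \text{ of indep. sets}\\ \bigcup_c S_c = V}} \prod_c x_c^{|S_c|}.
\]
Inclusion-exclusion on the set $T \subseteq V$ of uncovered vertices eliminates the union constraint, giving
\[
\ol{X}_G = \sum_{T \subseteq V} (-1)^{|T|} \prod_c I_{G[V\setminus T]}(x_c),
\]
where $I_H(t) = \sum_{S \text{ indep. in } H} t^{|S|}$ is the independence polynomial of $H$. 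Since $|T| = n - i_1(I_{G[V\setminus T]})$ is recoverable from the polynomial itself (using $n := |V(G)|$, which equals the minimum total monomial degree in $\ol{X}_G$), this formula gives the forward direction: the multiset $\{I_{G[U]} : U \subseteq V\}$ determines $\ol{X}_G$.

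For the converse, I would group the terms on the right by polynomial. Writing $N_P := |\{U \subseteq V : I_{G[U]} = P\}|$, the identity becomes
\[
\ol{X}_G = \sum_P (-1)^{n - i_1(P)} N_P \prod_c P(x_c).
\]
Recovering the multiset is then equivalent to recovering each $N_P$, which reduces to the linear-independence claim that $\mathcal{I}_P(x) := \prod_c P(x_c)$, as $P$ ranges over distinct polynomials with $P(0) = 1$, yields linearly independent symmetric functions.

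The main obstacle is this linear independence, which I plan to establish by specialization. Setting $x_1 = t_1, \ldots, x_k = t_k$ and $x_j = 0$ for $j > k$ turns $\mathcal{I}_P$ into $\prod_{j=1}^k P(t_j)$, the value of the $k$th tensor power $v_P^{\otimes k}$ of the coefficient vector $v_P$ of $P$ evaluated on $(t_1, \ldots, t_k)$. Since $P(0) = 1$ forces the vectors $v_{P_i}$ to be pairwise nonproportional, a standard induction on $m$ shows that $v_{P_1}^{\otimes k}, \ldots, v_{P_m}^{\otimes k}$ are linearly independent for $k \geq m - 1$: choose a linear functional $\ell$ with $\ell(v_{P_1}) = 0$ but $\ell(v_{P_i}) \neq 0$ for $i > 1$, apply $\ell \otimes \tn{id}^{\otimes (k-1)}$ to any claimed dependence, and invoke the inductive hypothesis on the reduced relation among $v_{P_2}^{\otimes(k-1)}, \ldots, v_{P_m}^{\otimes(k-1)}$.
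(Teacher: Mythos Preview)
The paper does not give its own proof of this theorem; it is quoted as Corollary~1.6 of \cite{new_p_expansion} and used as a black box. So there is no in-paper argument to compare against.

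Your argument is correct. The inclusion--exclusion identity
\[
\ol{X}_G \;=\; \sum_{T\subseteq V}(-1)^{|T|}\prod_c I_{G[V\setminus T]}(x_c)
\]
is exactly the right reformulation, and your linear-independence step is sound: for finitely many distinct polynomials $P_1,\dots,P_m$ with $P_i(0)=1$, the coefficient vectors $v_{P_i}$ are pairwise nonproportional, and your induction with a separating functional $\ell$ shows the tensor powers $v_{P_i}^{\otimes k}$ are independent once $k\ge m-1$. Specializing $x_{k+1}=x_{k+2}=\cdots=0$ then yields independence of the $\prod_c P_i(x_c)$ as formal symmetric power series, which is what you need to read off the multiplicities $N_P$.

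One small expositional slip: in the forward direction you recover $|T|$ via $n-i_1(P)$, but your parenthetical extracts $n$ from $\ol{X}_G$, which you do not yet have in that direction. Just take $n$ from the multiset instead (e.g.\ $n=\max_P i_1(P)$, or $|\text{multiset}|=2^n$). This is cosmetic and does not affect the validity of the proof.
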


By the \emph{\tb{\tcb{independence polynomial}}}, we mean the polynomial $I_G(x)$ such that the coefficient of $x^k$ is the number of independent subsets of $V$ of size $k$, where an \emph{\tb{\tcb{independent set}}} (also called a \emph{\tb{\tcb{stable set}}}) is a subset of $V$ containing no two adjacent vertices.

In particular, for any graph $H$ which is \emph{\tb{\tcb{independence unique}}}, meaning no other nonisomorphic graph has the same independence polynomial, the number of induced subgraphs of $G$ isomorphic to $H$ is determined by $\ol{X}_G$. \cite{pierson2024counting} explicitly lists a number of these independence unique graphs on 4 and 5 vertices that can be counted by $\ol{X}_G$:

\begin{theorem}[Pierson~\cite{pierson2024counting}, Theorems 3 and 4]\label{thm:graph_list}
    The number of induced copies in $G$ of the following order 4 and order 5 graphs can be computed from $\overline{X}_G$:
    \begin{center}
        \includegraphics[scale=0.8]{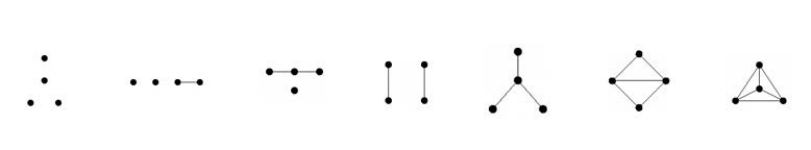}\\
        \includegraphics[scale=0.5]{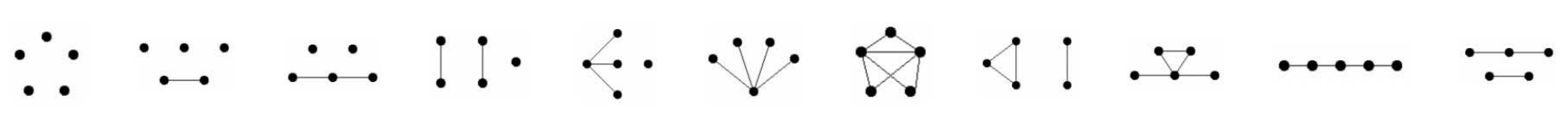}
    \end{center}
\end{theorem}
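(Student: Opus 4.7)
The plan is to derive this as a direct corollary of Theorem \ref{thm:ind_poly}. Since knowing $\overline{X}_G$ is equivalent to knowing the multiset $\mathcal{M}(G) := \{I_H(x) : H \text{ an induced subgraph of } G\}$ of independence polynomials of induced subgraphs of $G$, it suffices to show that each graph $H$ in the list is \emph{independence unique}: no other graph with $|V(H)|$ vertices has independence polynomial equal to $I_H(x)$. Once this is shown, the number of induced copies of $H$ in $G$ can be read off from $\mathcal{M}(G)$ by counting how many entries equal $I_H(x)$ (the number of vertices of each induced subgraph is recoverable from the coefficient of $x^1$ in its independence polynomial, so there is no ambiguity about which polynomials correspond to $|V(H)|$-vertex subgraphs).

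To carry out the independence-uniqueness check, I would first enumerate the $11$ graphs on $4$ vertices and the $34$ graphs on $5$ vertices, computing $I_H(x) = \sum_k i_k(H) x^k$ for each. This is mechanical: for every subset $S \subseteq V(H)$ I record whether $S$ is independent and tally the counts $i_k(H)$. I would then group the graphs by their independence polynomials and identify the singleton classes; the graphs in the theorem's list should be exactly (a subset of) the graphs lying in a singleton class.

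For each $H$ in the list, I would then conclude: the coefficient of $I_H(x)$ in $\mathcal{M}(G)$ equals the number of induced copies of $H$ in $G$, since no other isomorphism type contributes that polynomial. The reverse direction (verifying that the displayed graphs are indeed independence unique on their respective vertex counts, and identifying which other $4$- and $5$-vertex graphs fail to be) is the only real content of the argument, and it is finite but tedious; the main obstacle is simply the bookkeeping of a modest case analysis rather than anything conceptual. This is especially easy to handle on a computer, but can also be done by hand by noting that many small graphs are distinguished by easily-computable invariants (number of edges $\binom{n}{2} - i_2$, independence number, etc.) derivable from $I_H(x)$.
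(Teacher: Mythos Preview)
Your proposal is correct and matches the paper's own justification: the paper does not give a standalone proof of Theorem~\ref{thm:graph_list} here but instead presents it as an immediate consequence of Theorem~\ref{thm:ind_poly}, noting explicitly that ``for any graph $H$ which is independence unique \dots\ the number of induced subgraphs of $G$ isomorphic to $H$ is determined by $\ol{X}_G$,'' and that the listed graphs are precisely such independence-unique graphs on $4$ and $5$ vertices. Your plan to verify independence uniqueness by a finite enumeration of the $11$ and $34$ isomorphism types and then read off the count from $\mathcal{M}(G)$ is exactly this argument spelled out in full.
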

When counting induced copies of a subgraph, we consider two induced copies to be the same if they use the same set of vertices, so even if there is a nontrivial automorphism taking an induced subgraph to itself, it still only gets counted once.

\cite{pierson2024counting} went on to conjecture that the KSF actually distinguishes all graphs:

\begin{conjecture}[Pierson \cite{pierson2024counting}, Conjecture 2]\label{conjecture:distinguish}
There do not exist nonisomorphic graphs $G$ and $H$ with $\overline{X}_G = \overline{X}_H$.
\end{conjecture}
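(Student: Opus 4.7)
The plan is to \emph{refute} Conjecture 1.5 rather than to prove it, since the abstract already announces explicit 8-vertex counterexamples. The strategy is to combine Theorem 1.3 with an exhaustive computer search. By Theorem 1.3, two graphs $G,H$ satisfy $\ol{X}_G=\ol{X}_H$ if and only if they have the same multiset of independence polynomials of induced subgraphs. This reduces the problem to comparing a finite, easily-computable invariant on each graph, which is exactly the kind of thing a computer can enumerate across all graphs on up to $n$ vertices for modest $n$.

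Concretely, I would proceed as follows. First, for each $n$, generate all nonisomorphic graphs on $n$ vertices using a standard tool such as \texttt{nauty}'s \texttt{geng}. For each such $G$, compute the list $(I_{G[S]}(x))_{S\subseteq V(G)}$ of independence polynomials of all $2^n$ induced subgraphs and store it as a sorted (multiset) hash key. Bucket graphs by this key and report any bucket with more than one isomorphism class. Second, run this for $n\le 8$; the abstract asserts the first collisions appear at $n=8$ and that there are exactly four such pairs, so a negative result at $n\le 7$ would both match expectation and establish minimality. Third, for each collision, verify the equality $\ol{X}_G=\ol{X}_H$ independently by computing low-degree coefficients directly from the set-coloring definition, and confirm nonisomorphism via \texttt{nauty}.

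To magnify a finite list of counterexamples into infinite families (the second contribution announced in the abstract), I would look for graph operations $\Phi$ that preserve $\ol{X}$-equality: candidates include taking disjoint unions with a fixed graph, attaching a common rooted gadget at a chosen vertex, or forming Cartesian/pointed products. The tool of choice here is again Theorem 1.3: show that if the multisets of induced-subgraph independence polynomials agree for $(G,H)$, then they also agree for $(\Phi(G),\Phi(H))$, typically via a bijection on induced subgraphs of $\Phi(G)$ and $\Phi(H)$ that descends to the original bijection. Disjoint union with a fixed graph is immediate; attaching a dangling vertex or a pendant edge at a common ``compatible'' location is the first nontrivial case to analyze.

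The main obstacle is computational rather than conceptual. The search has roughly $12{,}346$ graphs on $8$ vertices, each contributing $2^8=256$ induced subgraph independence polynomials, so memory and hashing must be handled with some care, and one must confirm that the multiset of polynomials (as opposed to the multiset of isomorphism classes) is what actually matches — independence polynomials do not determine isomorphism type, so in principle a collision could be an artifact of the coarser invariant. Cross-checking a genuine $\ol{X}_G=\ol{X}_H$ equality from the set-coloring definition (or via a richer invariant derived from Theorem 1.3 such as the multiset of $(I_{G[S]}, I_{G[V\setminus S]})$ pairs) is the safeguard. For the closure constructions, the main subtlety is identifying the ``right'' attachment so that the induced-subgraph bijection between $G$ and $H$ lifts cleanly to $\Phi(G)$ and $\Phi(H)$.
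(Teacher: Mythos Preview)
Your proposal is correct and matches the paper's approach: an exhaustive computer search over small graphs to find counterexamples (the paper finds exactly four pairs at $n=8$ and confirms minimality), followed by closure operations to generate infinite families. One small clarification: your worry that a collision in the multiset of independence polynomials ``could be an artifact of the coarser invariant'' is unnecessary, since Theorem~1.3 is a genuine equivalence---equal multisets of induced-subgraph independence polynomials \emph{is} equal KSF, not merely a necessary condition---so no cross-check is logically required (though it is good practice). For the closure constructions, the paper's specific choices go a bit beyond disjoint union: it also uses the \emph{join} $G\odot H$ (via a multiplicative structure $\widetilde m_\lambda\odot\widetilde m_\mu=\widetilde m_{\lambda\sqcup\mu}$ under which KSFs multiply over joins), and a more delicate ``attach $H$ to all vertices of $G$ except one chosen $v$'' construction that requires the extra hypothesis $\ol X_{G_1-v_1}=\ol X_{G_2-v_2}$; these are in the spirit of your ``attaching a gadget at a compatible location,'' but the analysis proceeds via the $\ol{\widetilde m}$-expansion and stable set covers rather than directly through Theorem~1.3.
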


We disprove Conjecture \ref{conjecture:distinguish} in \S \ref{section:counterexamples} by showing four different pairs of nonisomorphic graphs on 8 vertices such that the two graphs in each pair have the same KSF as each other. We also give several ways to use these pairs to construct more graph pairs with equal KSF. 

\bigskip

Then in \S \ref{sec:orellana-scott} and \S \ref{sec:split_graphs}, we respectively investigate the Orellana-Scott \cite{orellana2014graphs} and Aliste-Prieto-Crew-Spirkl-Zamora \cite{aliste2021vertex} constructions of infinite families of pairs of graphs with equal CSF. A natural question is whether any of these graph pairs with equal CSF also have equal KSF. We have not found any such pairs, but we were instead able to show that given certain conditions, these graph pairs with equal CSF are distinguished by the KSF.

\bigskip

The code used for this project was written in Sage \cite{sage} with some help from ChatGPT \cite{chatgpt}, and can be found at this \href{https://github.com/Soham2020sam/KromaticSymmetricFunction/}{Github link}.

\bigskip

In the process of finding independence unique graphs that we could use when applying Theorem \ref{thm:ind_poly}, we counted the number of independence unique graphs on small numbers of vertices. We posted those counts as Sequence A385864 on OEIS \cite{oeis}, which can be found at \href{https://oeis.org/A385864}{https://oeis.org/A385864}. The known terms so far are $$1, 2, 4, 7, 13, 24, 53, 109, 284, 746, 2416, 8804.$$


\section{Graphs with equal Kromatic symmetric function}\label{section:counterexamples}

Based on our code, we found that in each of the four pairs of 8-vertex graphs shown below, the two graphs are nonisomorphic but have the same KSF. These graphs are thus counterexamples to Conjecture \ref{conjecture:distinguish}, and our code also shows that they are the smallest counterexamples that exist. Note that in the first pair, the graphs can be made isomorphic by removing a single edge from each graph, and in each other pair, they can be made isomorphic by removing two edges from each graph.
\begin{center}
    \includegraphics[width=5cm]{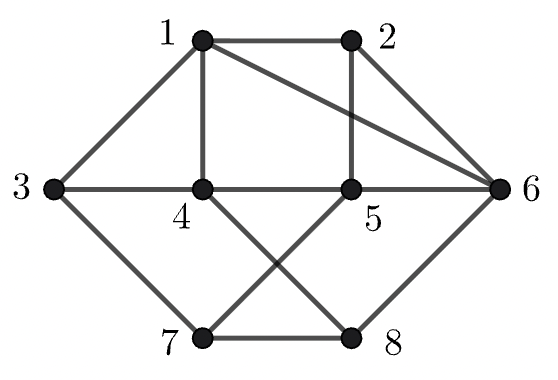}
    \hspace{0.5cm}
    \includegraphics[width=5cm]{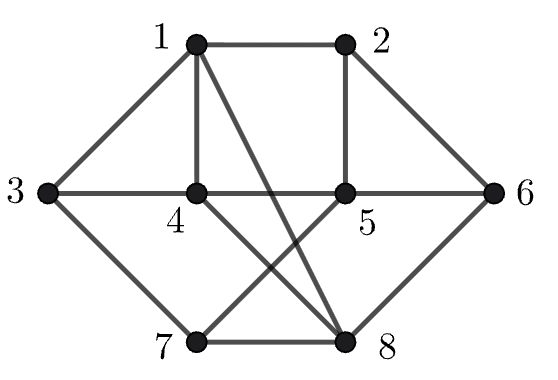} \\
    \vspace{0.5cm}
    \includegraphics[width=5cm]{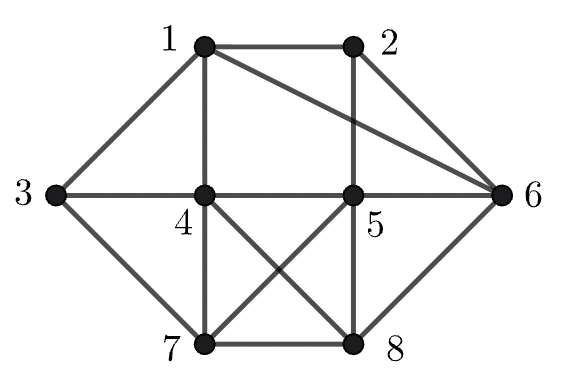}
    \hspace{0.5cm}
    \includegraphics[width=5cm]{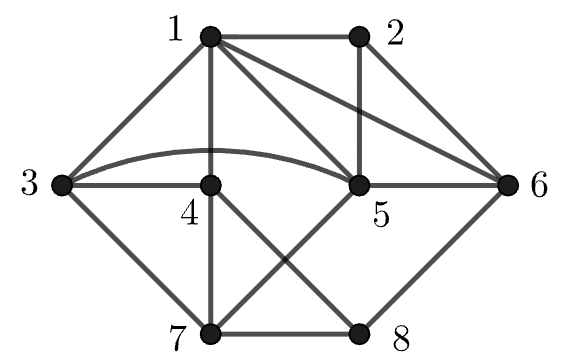} \\
    \vspace{0.5cm}
    \includegraphics[width=5cm]{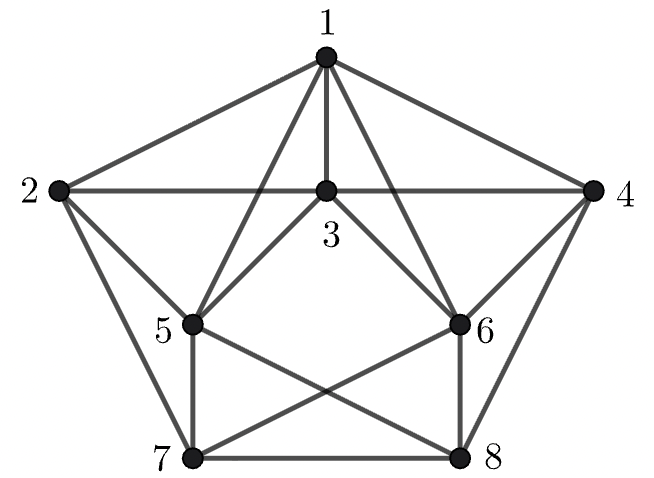}
    \hspace{0.5cm}
    \includegraphics[width=5cm]{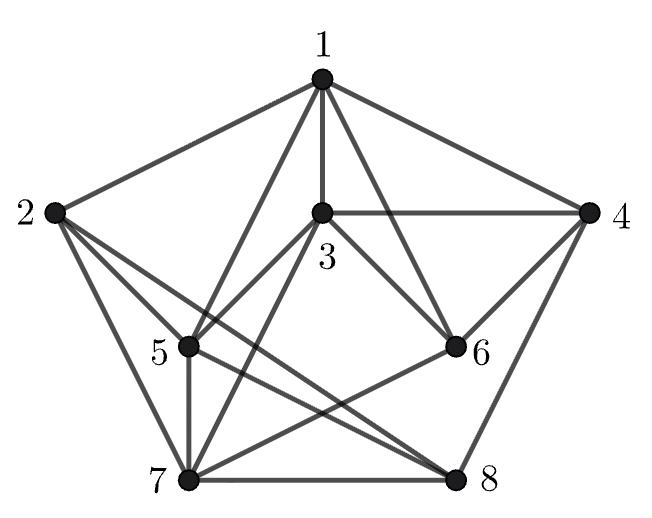} \\
    \vspace{0.5cm}
    \includegraphics[width=5cm]{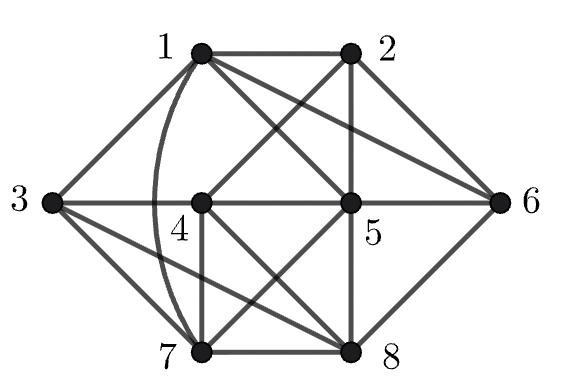}
    \hspace{0.5cm}
    \includegraphics[width=5cm]{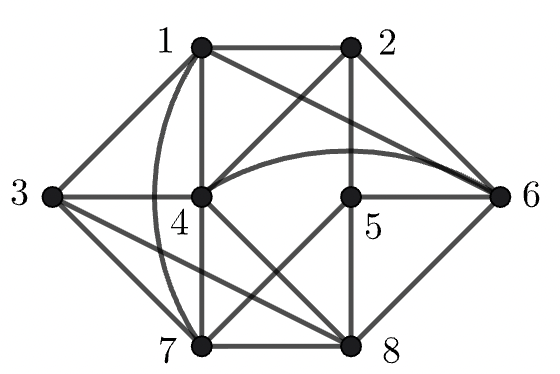} \\
\end{center}

We now give several ways to use these examples to construct more pairs of nonisomorphic graphs with equal KSF. The simplest way is by taking disjoint unions: 

\begin{prop}
    If $\ol{X}_{G_1} = \ol{X}_{G_2}$ and $\ol{X}_{H_1} = \ol{X}_{H_2},$ then $\ol{X}_{G_1\sqcup H_1} = \ol{X}_{G_2\sqcup H_2}.$
\end{prop}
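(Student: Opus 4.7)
The plan is to reduce the statement to multiplicativity of the KSF over disjoint unions, namely the identity $\ol{X}_{G \sqcup H} = \ol{X}_G \cdot \ol{X}_H$. Once this is established, the proposition follows immediately: by the hypotheses,
\[
\ol{X}_{G_1 \sqcup H_1} = \ol{X}_{G_1} \cdot \ol{X}_{H_1} = \ol{X}_{G_2} \cdot \ol{X}_{H_2} = \ol{X}_{G_2 \sqcup H_2}.
\]

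To prove the multiplicativity identity, I would observe that in $G \sqcup H$ there are no edges between $V(G)$ and $V(H)$, so the condition $\kappa(u) \cap \kappa(v) = \varnothing$ for $uv \in E$ places no constraints across the two vertex sets. Consequently, the proper set colorings of $G \sqcup H$ are in bijection with pairs $(\kappa_1, \kappa_2)$, where $\kappa_1$ is a proper set coloring of $G$ and $\kappa_2$ is a proper set coloring of $H$, via the correspondence $\kappa \leftrightarrow (\kappa|_{V(G)}, \kappa|_{V(H)})$. Under this bijection the monomial for $\kappa$ factors as the product of the monomials for $\kappa_1$ and $\kappa_2$, so summing over all proper set colorings gives
\[
\ol{X}_{G \sqcup H} = \sum_{\kappa_1} \prod_{v \in V(G)} \prod_{i \in \kappa_1(v)} x_i \; \cdot \; \sum_{\kappa_2} \prod_{v \in V(H)} \prod_{i \in \kappa_2(v)} x_i = \ol{X}_G \cdot \ol{X}_H.
\]

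There is no real obstacle here; the only thing to be careful about is writing out the bijection and the factorization cleanly so the reader sees that both steps rest purely on the absence of cross-edges in a disjoint union. No new combinatorial input is required beyond the definition of the KSF.
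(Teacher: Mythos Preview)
Your proof is correct and follows essentially the same approach as the paper: establish the multiplicativity identity $\ol{X}_{G\sqcup H} = \ol{X}_G\cdot\ol{X}_H$ via the obvious bijection between proper set colorings of $G\sqcup H$ and pairs of proper set colorings of $G$ and $H$, then chain the equalities using the hypotheses. The paper's version is slightly terser, but the content is identical.
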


\begin{proof}
    Note that for any graphs $G$ and $H,$ a proper set coloring of $G\sqcup H$ corresponds to taking any proper set coloring of $G$ together with any proper set coloring of $H$. The monomials in $\ol{X}_G$ and $\ol{X}_H$ corresponding to those two proper set colorings multiply to give a monomial in $\ol{X}_{G\sqcup H}$ corresponding to the resulting proper set coloring of $G\sqcup H$. It follows that $\ol{X}_{G\sqcup H} = \ol{X}_G\ol{X}_H$. Thus, given our assumptions, we have $$\ol{X}_{G_1\sqcup H_1} = \ol{X}_{G_1}\ol{X}_{H_1} = \ol{X}_{G_2}\ol{X}_{H_2} = \ol{X}_{G_2\sqcup H_2}.$$
\end{proof}

The second way is by taking joins, where the \emph{\tb{\tcb{join}}} $G\odot H$ is the graph formed by connecting all vertices of $G$ to all vertices of $H$:

\begin{prop}\label{prop:join_equal_ksf}
    If $\ol{X}_{G_1} = \ol{X}_{G_2}$ and $\ol{X}_{H_1}=\ol{X}_{H_2}$, then $\ol{X}_{G_1\odot H_1} = \ol{X}_{G_2\odot H_2}.$
\end{prop}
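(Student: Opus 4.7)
The plan is to show that $\ol{X}_{G \odot H}$ is determined as an explicit function of $\ol{X}_G$ and $\ol{X}_H$, so that replacing either factor by a graph with the same KSF cannot change the KSF of the join. First I would observe that because every vertex of $G$ is adjacent to every vertex of $H$ inside $G \odot H$, a proper set coloring $\kappa$ of $G \odot H$ is equivalent to a pair $(\kappa_G, \kappa_H)$, where $\kappa_G$ is a proper set coloring of $G$, $\kappa_H$ is a proper set coloring of $H$, and the full color sets $\bigcup_{u \in V(G)} \kappa_G(u)$ and $\bigcup_{v \in V(H)} \kappa_H(v)$ are disjoint. The monomial assigned to $\kappa$ is then the product of the monomials assigned to $\kappa_G$ and $\kappa_H$.

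Next I would write the monomial expansions $\ol{X}_G = \sum_\alpha c_\alpha(G)\, \mathbf{x}^\alpha$ and $\ol{X}_H = \sum_\beta c_\beta(H)\, \mathbf{x}^\beta$, where $c_\alpha(G)$ counts the proper set colorings of $G$ in which the color $i$ is received by exactly $\alpha_i$ vertices. Under this bookkeeping, the disjoint-color-set condition on $(\kappa_G, \kappa_H)$ translates precisely to the condition $\mathrm{supp}(\alpha) \cap \mathrm{supp}(\beta) = \emptyset$ on the corresponding exponent vectors, yielding
\[
\ol{X}_{G \odot H} \;=\; \sum_{\substack{\alpha,\beta \\ \mathrm{supp}(\alpha) \cap \mathrm{supp}(\beta) = \emptyset}} c_\alpha(G)\, c_\beta(H)\, \mathbf{x}^{\alpha + \beta}.
\]
Since the coefficients $c_\alpha(G)$ are by definition the monomial coefficients of $\ol{X}_G$ (and likewise for $H$), the right-hand side is a function solely of $\ol{X}_G$ and $\ol{X}_H$.

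Applying this identity to both pairs and using $\ol{X}_{G_1} = \ol{X}_{G_2}$ and $\ol{X}_{H_1} = \ol{X}_{H_2}$ then immediately gives $\ol{X}_{G_1 \odot H_1} = \ol{X}_{G_2 \odot H_2}$. I do not expect a substantive obstacle: the argument is essentially a direct bookkeeping step, and the only point requiring care is verifying that the complete bipartite adjacency between $V(G)$ and $V(H)$ translates faithfully into the disjoint-support condition on exponent vectors.
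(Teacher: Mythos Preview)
Your argument is correct. The key observation—that proper set colorings of $G\odot H$ biject with pairs of proper set colorings of $G$ and $H$ using disjoint color sets, which then translates to a disjoint-support condition on monomials—is exactly right, and it immediately shows that $\ol{X}_{G\odot H}$ depends only on $\ol{X}_G$ and $\ol{X}_H$.

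The paper gives two proofs. Your approach is essentially a streamlined version of the paper's second proof: the paper defines an auxiliary product $\odot$ on symmetric functions by $\widetilde m_\lambda\odot\widetilde m_\mu:=\widetilde m_{\lambda\sqcup\mu}$ and then proves $\ol{X}_{G\odot H}=\ol{X}_G\odot\ol{X}_H$ indirectly, by expanding the KSF as a sum of CSFs of clan graphs and invoking the corresponding CSF identity. Your disjoint-support product on monomials is that same $\odot$ product unpacked at the monomial level, so you are proving the same multiplicativity statement but with a shorter, self-contained argument that bypasses the $\widetilde m$-basis and the clan-graph detour. The paper's first proof goes a genuinely different route, using the characterization of $\ol{X}_G$ via the multiset of independence polynomials of induced subgraphs, together with the fact that every induced subgraph of $G\odot H$ is a join $G'\odot H'$ with $I_{G'\odot H'}=I_{G'}+I_{H'}-1$. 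Your approach is the most elementary of the three; the paper's versions have the advantage of introducing machinery ($\odot$-multiplication in $\ol{\widetilde\Lambda}$, the independence-polynomial characterization) that gets reused for later constructions.
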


We will give two different proofs. One proof is using the characterization of the KSF in terms of independence polynomials from Theorem \ref{thm:ind_poly}:

\begin{proof}[Proof 1.]
     Since $\ol{X}_{G_1} = \ol{X}_{G_2}$, for each $G_1'$ that is an induced subgraph in $G_1$, there exists a corresponding induced subgraph in $G_2'$ of $G_2$ such that $I_{G_1'} = I_{G_2'}$. We similarly have that for each $H_1'$ that is an induced subgraph of $H_1$, there exists a corresponding induced subgraph in $H_2'$ of $H_2$ such that $I_{H_1'} = I_{H_2'}$. Each induced subgraph of $G_1 \odot H_1$ can be written as $G_1' \odot H_1',$ where $G_1'$ is an induced subgraph of $G_1$ and $H_1'$ is an induced subgraph of $H_1$. Then $$I_{G_1' \odot H_1'} = I_{G_1'} + I_{H_1'} - 1,$$ because each independent set in $G_1' \odot H_1'$ is either only in $G_1'$ or only in $H_1'$ or is empty, and we subtract 1 because we overcount the empty subset. Thus, we can pair the induced subgraph $G_1'\odot H_1'$ of $G_1\odot H_1$ with the induced subgraph $G_1'\dot H_1'$ of $G_2 \odot H_2$, and we have $I_{G_2' \odot H_2'} = I_{G_1' \odot H_1'}$. Because of this pairing, $G_1\odot H_1$ and $G_2\odot H_2$ have the same multiset of independence polynomials of induced subgraphs, hence the same KSF by Theorem \ref{thm:ind_poly}.
\end{proof}

For the second proof, we will introduce some additional notation, which will also be useful for our other constructions of more graph pairs with equal KSF. First we review some background on symmetric functions.

A \emph{\tb{\tcb{symmetric function}}} is a power series of bounded degree in a countably infinite variable set that stays the same under any permutation of the variables. We write $\Lambda$ for the ring of symmetric functions, and $\ol{\Lambda}$ for its \emph{\tb{\tcb{completion}}}, which allows power series of unbounded degree. Thus, $X_G$ lives in $\Lambda\subseteq \ol{\Lambda},$ and $\ol{X}_G$ technically lives not in $\Lambda$ but in $\ol{\Lambda}.$

A \emph{\tb{\tcb{partition}}} $\lambda = \lambda_1\dots\lambda_\ell$ is a nondecreasing sequence $\lambda_1\ge \dots \ge \lambda_\ell$ of positive integers, called the \emph{\tb{\tcb{parts}}}. The classic vector space bases for $\Lambda$ are all indexed by partitions. In particular, the \emph{\tb{\tcb{monomial symmetric function}}} $m_\lambda$ is the sum of all monomials of the form $x_{i_1}^{\lambda_1}\dots x_{i_\ell}^{\lambda_\ell}$ such that $i_1,\dots,i_\ell$ are all distinct, and the $m_\lambda$'s form a basis for $\Lambda$. A slight modification of this basis is the \emph{\tb{\tcb{augmented monomial symmetric functions}}} $$\widetilde{m}_\lambda := m_\lambda \cdot \prod_{i\ge 1} (r_i(\lambda))!,$$ where $r_i(\lambda)$ is the number of parts of $\lambda$ of size $i$. The $\widetilde{m}$'s are useful in the context of CSFs because $\widetilde{m}_\lambda$ is the CSF $X_{K_\lambda}$ of the weighted complete graph $K_\lambda:= (K_{\ell},\lambda)$ on $\ell$ vertices with vertex weights equal to the parts $\lambda_1,\dots,\lambda_\ell$ of $\lambda$. The $\widetilde{m}$'s are also useful because for any weighted graph $(G,w)$, the expansion of $X_{(G,w)}$ in the $\widetilde{m}$-basis has a nice interpretation:

\begin{prop}[Stanley \cite{stanley1995symmetric}, Theorem 2.5; Crew-Spirkl \cite{crew2020deletion}, Lemma 1]\label{prop:normal_m}
    $[\widetilde{m}_\lambda]X_{(G,w)}$ is the number of ways to partition the vertices of $G$ into stable sets such that when the vertex weights are added within each stable set, the sums are the parts of $\lambda.$
\end{prop}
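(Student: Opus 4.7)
The plan is to reorganize the sum defining $X_{(G,w)}$ by grouping proper colorings according to the set partition of $V$ induced by their nonempty color classes. A proper coloring $\kappa: V \to \mathbb{P}$ is equivalent to the data of (i) a set partition $\pi = \{B_1, \ldots, B_\ell\}$ of $V$ into nonempty stable sets, together with (ii) an injection $\phi: \pi \to \mathbb{P}$ assigning each block a distinct color. The monomial contributed by $\kappa$ is
$$\prod_{v \in V} x_{\kappa(v)}^{w(v)} = \prod_{j=1}^{\ell} x_{\phi(B_j)}^{w(B_j)}, \qquad w(B_j) := \sum_{v \in B_j} w(v),$$
so the monomial depends on $\pi$ only through the multiset $\{w(B_1), \dots, w(B_\ell)\}$.

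Next I would fix a set partition $\pi = \{B_1, \ldots, B_\ell\}$ of $V$ into stable sets whose multiset of block weights is the partition $\lambda = (\lambda_1 \geq \cdots \geq \lambda_\ell)$, and compute the total contribution of all colorings inducing $\pi$. Summing over injections $\phi$ amounts to summing $x_{i_1}^{w(B_1)} \cdots x_{i_\ell}^{w(B_\ell)}$ over all ordered tuples $(i_1, \ldots, i_\ell)$ of distinct positive integers. The key identity to verify is
$$\widetilde{m}_\lambda \;=\; \sum_{(i_1,\ldots,i_\ell)\text{ distinct}} x_{i_1}^{\lambda_1}\cdots x_{i_\ell}^{\lambda_\ell},$$
which holds because each monomial of $m_\lambda$ is hit exactly $\prod_i r_i(\lambda)!$ times on the right-hand side, corresponding to the permutations of the positions of equal parts of $\lambda$. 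Since the right-hand side is symmetric in the ordering of the exponents, any permutation of $(w(B_1),\dots,w(B_\ell))$ gives the same sum, so the contribution of $\pi$ to $X_{(G,w)}$ is precisely $\widetilde{m}_\lambda$.

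Finally I would combine the two steps: summing over all set partitions $\pi$ of $V$ into stable sets gives
$$X_{(G,w)} \;=\; \sum_\lambda N_\lambda\, \widetilde{m}_\lambda,$$
where $N_\lambda$ counts the set partitions of $V$ into stable sets whose multiset of block weights equals $\lambda$. Reading off the coefficient of $\widetilde{m}_\lambda$ yields the claim. There is no real obstacle here; the only point requiring care is the bookkeeping when $\lambda$ has repeated parts, but the symmetric form of the identity for $\widetilde{m}_\lambda$ eliminates any overcounting issue automatically.
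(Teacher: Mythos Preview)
Your proof is correct and is essentially the standard argument: group proper colorings by the stable-set partition they induce, then recognize that summing over all injective assignments of colors to blocks produces exactly $\widetilde{m}_\lambda$. The paper itself does not supply a proof of this proposition; it is stated as a known result with citations to Stanley and to Crew--Spirkl, whose arguments are the same as yours.
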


Inspired by this, the authors of \cite{crew2023kromatic} defined the following $K$-analogue of the $\widetilde{m}$-basis:

\begin{definition}[Crew-Pechenik-Spirkl \cite{crew2023kromatic}]
    The \emph{\tb{\tcb{$K$-theoretic augmented monomial symmetric function}}} $\m_\lambda$ is $$\m_\lambda := \ol{X}_{K_\lambda}.$$ 
\end{definition}

They then gave a $K$-analogue of the $\widetilde{m}$-expansion formula for CSFs, where essentially the only difference is that the stable sets are allowed to overlap:

\begin{definition}[Crew-Pechenik-Spirkl \cite{crew2023kromatic}]
    Define a \emph{\tb{\tcb{stable set cover (SSC)}}} to be a set of stable sets such that each vertex in the graph is covered by at least one stable set. Write ${\textsf{SSC}}(G)$ for the set of all SSCs of $G$. For each $C\in {\textsf{SSC}}(G)$, write $\lambda(C)$ for the partition whose parts are the sums of the vertex weights within each stable set in $C$.
\end{definition}

Then their $\m$-expansion for $\ol{X}_G$ is as follows:

\begin{prop}[Crew-Pechenik-Spirkl \cite{crew2023kromatic}, Proposition 3.4]\label{prop:crewssc}
    For any vertex-weighted graph $(G, w)$, we have
    $$\ol{X}_{(G, w)} = \sum_{C \in {\textsf{\textup{SSC}}}(G)} \ol{\widetilde{m}}_{\lambda(C)}.$$
\end{prop}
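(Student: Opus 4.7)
The plan is to prove the identity via a monomial-preserving bijection between proper set colorings $\kappa$ of $G$ and pairs $(C, \kappa_0)$, where $C \in \textsf{SSC}(G)$ and $\kappa_0$ is a proper set coloring of the weighted complete graph $K_{\lambda(C)}$, with the vertices of $K_{\lambda(C)}$ naturally indexed by the elements of $C$ (each $S \in C$ carrying weight $w(S)$). Once this bijection is in hand, grouping proper set colorings of $G$ by the SSC they produce and summing the corresponding monomials will give $\ol{X}_{(G,w)} = \sum_{C} \ol{X}_{K_{\lambda(C)}} = \sum_{C} \ol{\widetilde{m}}_{\lambda(C)}$, as desired.

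To build the bijection, I would first go forward: starting from a pair $(C, \kappa_0)$, define $\kappa(v) := \bigcup_{S \in C,\, v \in S} \kappa_0(S)$. This is nonempty because $C$ covers $V$ and each $\kappa_0(S)$ is nonempty; properness holds because an edge $uv$ cannot lie inside any single $S \in C$ (stable sets contain no edge), and the sets $\kappa_0(S)$ for distinct $S$ are pairwise disjoint by properness of $\kappa_0$, so $\kappa(u)\cap\kappa(v) = \varnothing$. For the inverse, given $\kappa$, attach to each used color $i$ the stable set $S_i := \{v : i \in \kappa(v)\}$, take $C$ to be the set of distinct $S_i$'s, and set $\kappa_0(S) := \{i : S_i = S\}$. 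That these two constructions are mutually inverse follows once one notes that disjointness of the $\kappa_0(S)$'s forces each color $i$ to lie in $\kappa_0(S)$ for at most one $S \in C$, so the color class of $i$ under the forward-built $\kappa$ is exactly that unique $S$.

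For the monomial check, one verifies that both the monomial attached to $\kappa$ in $\ol{X}_{(G,w)}$ and the one attached to $(C,\kappa_0)$ in $\ol{X}_{K_{\lambda(C)}}$ equal $\prod_i x_i^{w(S_i)}$, with the product running over used colors $i$; this is immediate from the defining formulas once the correspondence is set up. The main obstacle, while conceptually minor, is being precise about the identification of the vertex set of $K_{\lambda(C)}$ with $C$: since $\lambda(C)$ is a (multi)partition rather than an ordered tuple, the cleanest convention is to take $C$ itself as the vertex set of the weighted complete graph $K_{\lambda(C)}$, with each $S \in C$ carrying weight $w(S)$. With this choice fixed, the remaining verifications are short and direct, and the proposition follows.
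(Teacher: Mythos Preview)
Your argument is correct: the monomial-preserving bijection between proper set colorings of $(G,w)$ and pairs $(C,\kappa_0)$ works exactly as you describe, and the two mutual inverse checks and the monomial computation go through cleanly. The only point worth being slightly more careful about is the phrase ``with the vertices of $K_{\lambda(C)}$ naturally indexed by the elements of $C$'': since $\m_\lambda$ is defined as $\ol{X}_{K_\lambda}$ for the abstract weighted complete graph $(K_\ell,\lambda)$, one technically needs that $\ol{X}$ of any weighted complete graph depends only on the multiset of weights, which is immediate but should perhaps be said explicitly. With that, your proof is complete.

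As for comparison with the paper: this proposition is not proved in the present paper at all. It is quoted verbatim as Proposition~3.4 of Crew--Pechenik--Spirkl~\cite{crew2023kromatic} and used as a black box. So there is no ``paper's own proof'' to compare against here; your bijective argument is essentially the natural one and is presumably close to what appears in~\cite{crew2023kromatic}.
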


We will now introduce some additional notation due to Tsujie \cite{tsujie2018chromatic} that will be helpful for describing $\m$-expansions of the KSFs of certain graphs:

\begin{definition}[Tsujie \cite{tsujie2018chromatic}]
    Define an alternative multiplication $\odot$ on symmetric functions by $$\widetilde{m}_\lambda \odot \widetilde{m}_\mu := \widetilde{m}_{\lambda \sqcup \mu},$$ where $\lambda\sqcup \mu$ is the partition each of whose parts is either a part of $\lambda$ or a part of $\mu$. Write $\widetilde{\Lambda}$ for the ring of symmetric functions with the usual addition and scalar multiplication and this modified multiplication, and $\ol{\widetilde{\Lambda}}$ for the completion of $\widetilde{\Lambda}.$
\end{definition}

\begin{example}
    We have $3211 \sqcup 42 = 432211,$ so $\widetilde{m}_{3211}\odot \widetilde{m}_{42} = \widetilde{m}_{432211}.$
\end{example}

Tsujie proves that the CSFs of simple graphs multiply over \emph{joins} under this $\odot$ multiplication (as opposed to multiplying over \emph{disjoint unions} under the normal multiplication), and we observe here that the same holds for weighted graphs. Namely, for weighted graphs $(G,w_G)$ and $(H,w_H)$, define the join $(G,w_G)\odot (H,w_H)$ to be the weighted graph formed by connecting all vertices of $G$ to all vertices of $H$, and taking the weight of each vertex to be its weight in whichever of $G$ or $H$ it comes from. Then we have:

\begin{proposition}[cf. Tsujie \cite{tsujie2018chromatic}, Lemma 2.9]\label{prop:join_multiply}
    For any weighted graphs $(G,w_G),$ $(H,w_H),$ $$X_{(G,w_G)\odot(H,w_H)} = X_{(G,w_G)}\odot X_{(H,w_H)}.$$
\end{proposition}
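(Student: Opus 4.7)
The plan is to expand both sides in the $\widetilde{m}$-basis using Proposition~\ref{prop:normal_m} and compare coefficients. The decisive observation is that in the join $(G,w_G)\odot(H,w_H)$, every vertex of $G$ is adjacent to every vertex of $H$, so any stable set of the join lies entirely in $V(G)$ or entirely in $V(H)$. Consequently, a partition of $V(G)\sqcup V(H)$ into stable sets of the join is the same data as a pair consisting of a partition of $V(G)$ into stable sets of $G$ together with a partition of $V(H)$ into stable sets of $H$.

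First, I would fix a partition $\lambda$ and apply Proposition~\ref{prop:normal_m} to write
\[
[\widetilde{m}_\lambda]\,X_{(G,w_G)\odot(H,w_H)} = \#\bigl\{\text{stable-set partitions of the join with weight multiset } \lambda\bigr\}.
\]
Using the bijection above, and noting that when the $G$-side partition has weight multiset $\lambda_G$ and the $H$-side has weight multiset $\lambda_H$ then the combined multiset is $\lambda_G\sqcup\lambda_H$, I would rewrite this as
\[
[\widetilde{m}_\lambda]\,X_{(G,w_G)\odot(H,w_H)} = \sum_{\lambda_G\sqcup\lambda_H=\lambda}\bigl([\widetilde{m}_{\lambda_G}]X_{(G,w_G)}\bigr)\bigl([\widetilde{m}_{\lambda_H}]X_{(H,w_H)}\bigr),
\]
again by Proposition~\ref{prop:normal_m} applied separately to $(G,w_G)$ and $(H,w_H)$.

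On the other side, expanding both factors in the $\widetilde{m}$-basis and using bilinearity of $\odot$ together with the defining identity $\widetilde{m}_{\lambda_G}\odot\widetilde{m}_{\lambda_H}=\widetilde{m}_{\lambda_G\sqcup\lambda_H}$, the coefficient of $\widetilde{m}_\lambda$ in $X_{(G,w_G)}\odot X_{(H,w_H)}$ is exactly the same sum $\sum_{\lambda_G\sqcup\lambda_H=\lambda}\bigl([\widetilde{m}_{\lambda_G}]X_{(G,w_G)}\bigr)\bigl([\widetilde{m}_{\lambda_H}]X_{(H,w_H)}\bigr)$. Since the coefficients match for every $\lambda$ and the $\widetilde{m}_\lambda$ form a basis, the two symmetric functions are equal.

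There is no real obstacle here; the only step that deserves a sentence of care is the bijection between stable-set partitions of the join and pairs of stable-set partitions, for which I would just note that a stable set meeting both sides would contain an adjacent pair by the definition of join. The extension from the unweighted case of Tsujie's Lemma~2.9 to the weighted case is automatic because vertex weights are preserved under the bijection, so the induced multiset of weight-sums is simply the disjoint union $\lambda_G\sqcup\lambda_H$.
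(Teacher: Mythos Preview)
Your proof is correct and follows essentially the same approach as the paper's: both arguments use Proposition~\ref{prop:normal_m}, observe that every stable set of the join lies entirely in $G$ or entirely in $H$, and conclude via the identity $\widetilde{m}_{\lambda_G}\odot\widetilde{m}_{\lambda_H}=\widetilde{m}_{\lambda_G\sqcup\lambda_H}$ together with distributivity of $\odot$ over addition. Your version just makes the coefficient comparison a bit more explicit.
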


\begin{proof}
    The proof is the same as for unweighted graphs. No stable set can contain both a vertex from $G$ and a vertex from $H$, so every stable set in $G\odot H$ is either a stable set in $G$ or a stable set in $H$. Thus, partitioning $V(G\odot H)$ into stable sets is equivalent to partitioning $V(G)$ into stable sets and also partitioning $V(H)$ into stable sets, and then taking the full collection of stable sets to be the disjoint union of those two collections of stable sets. The associated partition is then $\lambda \sqcup \mu$, where $\lambda$ is the partition whose parts are the total weights of the stable sets in $G$, and $\mu$ is the partition whose parts are the total weights of the stable sets in $H$. Thus, by Proposition \ref{prop:normal_m}, each $\widetilde{m}_{\nu}$ term in $X_{(G,w_G)\odot (H,w_H)}$ can be uniquely rewritten as $\widetilde{m}_{\lambda\sqcup \nu} = \widetilde{m}_\lambda \odot \widetilde{m}_\mu,$ where $\widetilde{m}_\lambda$ is a term in $X_{(G,w_G)}$ and $\widetilde{m}_\mu$ is a term in $X_{(H,w_H)}$. The result then follows from the distributive property of $\odot$ over addition.
\end{proof}

Next, we observe that if we work in the completion $\ol{\widetilde{\Lambda}}$, KSFs of weighted graphs also multiply over joins under the same $\odot$ multiplication:

\begin{prop}\label{prop:K_join_multiply}
    For any weighted graphs $(G,w_G),$ $(H,w_H)$, $$\ol{X}_{(G,w_G)\odot (H,w_H)} = \ol{X}_{(G,w_G)} \odot \ol{X}_{(H,w_H)}.$$
\end{prop}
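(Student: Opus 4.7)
The plan is to mirror the proof of Proposition \ref{prop:join_multiply} but use the SSC expansion of Proposition \ref{prop:crewssc} in place of the stable-partition expansion of Proposition \ref{prop:normal_m}. The core geometric input is unchanged: because every vertex of $G$ is adjacent to every vertex of $H$ in $G \odot H$, no stable set of $G \odot H$ can contain vertices from both $V(G)$ and $V(H)$.

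This observation upgrades immediately to a bijection $\textsf{SSC}(G \odot H) \leftrightarrow \textsf{SSC}(G) \times \textsf{SSC}(H)$: each SSC $C$ of $G \odot H$ splits uniquely as $C = C_G \sqcup C_H$, with $C_G$ the stable sets in $C$ contained in $V(G)$ and $C_H$ those contained in $V(H)$, and conversely any such pair recombines into an SSC of $G \odot H$. The associated partitions satisfy $\lambda(C) = \lambda(C_G) \sqcup \lambda(C_H)$. Applying Proposition \ref{prop:crewssc} to the left-hand side then yields
\[
\ol{X}_{(G, w_G) \odot (H, w_H)} = \sum_{C_G \in \textsf{SSC}(G)} \sum_{C_H \in \textsf{SSC}(H)} \m_{\lambda(C_G) \sqcup \lambda(C_H)}.
\]
To finish, I would establish the Kromatic analogue of the defining identity for $\odot$, namely the lemma $\m_\lambda \odot \m_\mu = \m_{\lambda \sqcup \mu}$. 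Distributing $\odot$ over the double sum and re-applying Proposition \ref{prop:crewssc} to each factor then converts the right-hand side into $\ol{X}_{(G, w_G)} \odot \ol{X}_{(H, w_H)}$.

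The main obstacle is the lemma. Since $K_\lambda \odot K_\mu = K_{\lambda \sqcup \mu}$ as weighted graphs, the lemma is formally the proposition specialized to complete graphs, so circular appeal must be avoided. I would prove it by comparing $\widetilde{m}$-coefficients directly. A short combinatorial identification of proper set colorings with ordered SSCs gives
\[
[\widetilde{m}_\nu]\, \ol{X}_{(G, w)} = \frac{N_G(\nu)}{\prod_j r_j(\nu)!},
\]
where $N_G(\nu)$ counts ordered tuples $(S_1, \ldots, S_{\ell(\nu)})$ of nonempty stable sets of $G$ covering $V$ with $w(S_i) = \nu_i$. Specializing to complete graphs and running the same splitting argument at the ordered level introduces a factor $\prod_j \binom{r_j(\nu)}{r_j(\lambda)}$ counting how the ordered slots distribute between the two sides, and a short computation shows that these binomials cancel exactly against the factorial discrepancy between $\prod_j r_j(\nu)!$ and $\prod_j r_j(\lambda)!\, r_j(\mu)!$, yielding the lemma.
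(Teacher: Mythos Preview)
Your argument is correct but takes a genuinely different route from the paper's. The paper proves Proposition~\ref{prop:K_join_multiply} by invoking the clan-graph expansion $\ol{X}_{(G,w_G)}=\sum_\alpha\frac{1}{\alpha!}X_{C_\alpha(G,w_G)}$ from \cite{crew2023kromatic}, distributing $\odot$, applying the CSF join identity (Proposition~\ref{prop:join_multiply}) termwise, and then observing $C_\alpha(G)\odot C_\beta(H)=C_{\alpha\sqcup\beta}(G\odot H)$. In that development the identity $\m_\lambda\odot\m_\mu=\m_{\lambda\sqcup\mu}$ is obtained afterward as Corollary~\ref{cor:m_multiply}. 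You instead invert this order: you use the SSC expansion (Proposition~\ref{prop:crewssc}) together with the bijection $\textsf{SSC}(G\odot H)\leftrightarrow\textsf{SSC}(G)\times\textsf{SSC}(H)$ to reduce the proposition to the $\m$-multiplicativity lemma, and then prove that lemma independently by a direct $\widetilde m$-coefficient count to avoid circularity. Two small remarks: your binomial factor should read $\prod_j\binom{r_j(\nu)}{r_j(\alpha)}$ with $\alpha$ the varying subpartition of $\nu$ landing on the $K_\lambda$ side, not $r_j(\lambda)$; and your ordered-tuple computation in fact works verbatim for arbitrary $G$ and $H$, so you could have compared $\widetilde m$-coefficients of $\ol X_{G\odot H}$ and $\ol X_G\odot\ol X_H$ directly without passing through the complete-graph special case. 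The paper's route is shorter because it recycles Proposition~\ref{prop:join_multiply} and the clan-graph formula wholesale; yours is more self-contained (no clan graphs needed) and makes the SSC combinatorics more visible.
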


\begin{proof}
    As noted in \cite{crew2023kromatic}, we can rewrite the KSF $\ol{X}_{(G,w_G)}$ as a linear combination of CSFs of clan graphs of $G$, namely, $$\ol{X}_{(G,w_G)} = \sum_{\alpha\vDash |V(G)|}\frac1{\alpha!}X_{C_\alpha(G,w_G)},$$ where $\alpha=\alpha_1\dots\alpha_{|V(G)|}$ is a \emph{\tb{\tcb{strict composition}}} of $|V(G)|$ (i.e. a sequence of $|V(G)|$ positive integers), $\alpha! := \alpha_1!\dots \alpha_{|V(G)|}!$, and $C_\alpha(G,w_G)$ is the \emph{\tb{\tcb{$\alpha$-clan graph}}} formed by blowing up the $i$th vertex $v_i$ of $G$ into a clique of $\alpha_i$ vertices all with the same weight as $v_i$, such that two vertices in $C_\alpha(G,w_G)$ are adjacent if they come from either the same or adjacent vertices in $G$. Then using the fact that $\odot$ distributes over addition together with Proposition \ref{prop:join_multiply},
    \begin{align*}
        \ol{X}_{(G,w_G)}\odot \ol{X}_{(H,w_H)} &= \left(\sum_\alpha \frac1{\alpha!}X_{C_\alpha(G,w_G)}\right)\odot \left(\sum_\beta \frac1{\beta!}X_{C_\beta(H,w_H)}\right) \\
        &= \sum_{\alpha,\beta} \frac1{\alpha!\beta!}X_{C_\alpha(G,w_G)}\odot X_{C_\beta(H,w_H)} \\
        &= \sum_{\alpha,\beta} \frac1{\alpha!\beta!}X_{C_\alpha(G,w_G)\odot C_\beta(H,w_H)}
    \end{align*}
    But the join of the $\alpha$-clan graph of $G$ and the $\beta$-clan graph of $H$ is the same thing as the $\gamma$-clan graph of $G\odot H$, where $\gamma = \alpha \sqcup \beta := \alpha_1\dots \alpha_{|V(G)|}\beta_1\dots \beta_{|V(H)|}$ is the composition formed by concatenating $\alpha$ and $\beta$. Then $\gamma! = \alpha!\beta!$, so $$\ol{X}_{(G,w_G)}\odot \ol{X}_{(H,w_H)} = \sum_\gamma \frac1{\gamma!}X_{C_\gamma((G,w_H)\odot(H,w_H))} = \ol{X}_{(G,w_G)\odot(H,w_H)},$$ as claimed.
\end{proof}

Note that this immediately gives another proof for Proposition \ref{prop:join_equal_ksf}:

\begin{proof}[Proof 2 of Proposition \ref{prop:join_equal_ksf}]
    Assuming $\ol{X}_{G_{1}}= \ol{X}_{G_{2}}$ and $\ol{X}_{H_{1}} = \ol{X}_{H_2}$, we get $$\ol{X}_{G_{1} \odot H_{1}} = \ol{X}_{G_1}\odot \ol{X}_{G_2} = \ol{X}_{H_1}\odot \ol{X}_{H_2} = \ol{X}_{G_2 \odot H_2}.$$
\end{proof}

Note also that a special case of Proposition \ref{prop:K_join_multiply} is that the $\m$'s multiply in $\ol{\widetilde{\Lambda}}$, since $\m_\lambda = \ol{X}_{K_\lambda}$, and $K_{\lambda \sqcup \mu} = K_\lambda \odot K_\mu$:

\begin{corollary}\label{cor:m_multiply}
    For any partitions $\lambda$ and $\mu$, $$\m_{\lambda\sqcup \mu} = \m_\lambda \odot \m_\mu.$$
\end{corollary}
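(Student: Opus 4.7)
The plan is to deduce this immediately from Proposition \ref{prop:K_join_multiply} by unwinding the definition of $\m_\lambda$ and observing that the weighted complete graph on $\lambda \sqcup \mu$ is the join of the weighted complete graphs on $\lambda$ and $\mu$.

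First I would write $\m_{\lambda \sqcup \mu} = \ol{X}_{K_{\lambda \sqcup \mu}}$ directly from the definition of the $K$-theoretic augmented monomial symmetric function. The key structural observation is then that $K_{\lambda \sqcup \mu} = K_\lambda \odot K_\mu$ as weighted graphs: concatenating the parts of $\lambda$ and $\mu$ produces a vertex set of size $\ell(\lambda) + \ell(\mu)$ with the prescribed weights, and adding every edge between the $K_\lambda$ side and the $K_\mu$ side (which is what the join does) upgrades the two disjoint cliques into a single clique of the correct size, while leaving the vertex weights untouched. This is really a one-line verification, but it is the only content beyond invoking the previous proposition.

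Next I would apply Proposition \ref{prop:K_join_multiply} to the weighted graphs $(K_\ell, \lambda)$ and $(K_{\ell'}, \mu)$ to conclude
\[
\ol{X}_{K_{\lambda \sqcup \mu}} \;=\; \ol{X}_{K_\lambda \odot K_\mu} \;=\; \ol{X}_{K_\lambda} \odot \ol{X}_{K_\mu},
\]
and then re-fold the two right-hand factors back into $\m_\lambda \odot \m_\mu$ by definition, yielding the claim.

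The hard part is nothing more than bookkeeping; there is no obstacle of substance, which is exactly why this is stated as a corollary rather than a proposition. If I wanted to double-check by an independent method, I could verify the identity combinatorially by matching stable set covers of $K_{\lambda \sqcup \mu}$ (which are exactly singletons, so $\lambda(C) = \lambda \sqcup \mu$ on the nose) against the $\odot$-product expansion via Proposition \ref{prop:crewssc}, but this cross-check is not needed for the proof itself.
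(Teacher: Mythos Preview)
Your proof is correct and is exactly the paper's argument: the paper derives the corollary in one line from Proposition~\ref{prop:K_join_multiply} using $\m_\lambda = \ol{X}_{K_\lambda}$ and $K_{\lambda\sqcup\mu} = K_\lambda \odot K_\mu$. (Your parenthetical cross-check via Proposition~\ref{prop:crewssc} is a bit muddled---it just recovers the definition $\ol{X}_{K_{\lambda\sqcup\mu}} = \m_{\lambda\sqcup\mu}$ rather than independently verifying the $\odot$-product side---but as you note, it is not needed.)
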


Now, we give another construction for building larger graph pairs with equal KSF out of smaller such pairs. We assume here that our graphs are unweighted, although a similar statement should hold for weighted graphs.

\begin{theorem}\label{thm:equalKotherV}
    Assume graphs $G_1, G_2, H_1, H_2$ satisfy $\ol{X}_{G_{1}} = \ol{X}_{G_{2}}$ and $\ol{X}_{H_{1}} = \ol{X}_{H_{2}}$. Suppose we also have vertices $v_1$ in $G_1$ and $v_2$ in $G_2$ such that $\ol{X}_{G_{1} - v_1} = \ol{X}_{G_{2} - v_2}$. Let $G_{1}'$ be the graph formed by attaching $H_1$ to all vertices of $G_1$ except $v_1$, and let $G_2'$ be the graph formed by attaching $H_2$ to all vertices of $G_2$ except $v_2$. Then $\ol{X}_{G_{1}'} = \ol{X}_{G_{2}'}$.
\end{theorem}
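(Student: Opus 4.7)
The plan is to apply Theorem \ref{thm:ind_poly}, which reduces the claim $\ol{X}_{G_1'} = \ol{X}_{G_2'}$ to the statement that $G_1'$ and $G_2'$ have the same multiset of independence polynomials of induced subgraphs. Any induced subgraph of $G_1'$ is determined by a pair $(A,B)$ with $A \subseteq V(G_1)$ and $B \subseteq V(H_1)$, and I would split into cases according to whether $v_1 \in A$.

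When $v_1 \notin A$, every vertex of $A$ is joined to every vertex of $B$ in $G_1'$, so $G_1'[A \cup B] = (G_1 - v_1)[A] \odot H_1[B]$, and the argument from Proof 1 of Proposition \ref{prop:join_equal_ksf} gives $I_{G_1'[A \cup B]}(x) = I_{(G_1 - v_1)[A]}(x) + I_{H_1[B]}(x) - 1$. When $v_1 \in A$, every independent set of $G_1'[A \cup B]$ either lies entirely in $A$ or avoids $A \setminus \{v_1\}$, and the two possibilities overlap only on subsets of $\{v_1\}$. Since $v_1$ has no edges to $H_1$, the second type is just a subset of $\{v_1\} \cup B$ whose intersection with $B$ is independent in $H_1[B]$, and a brief inclusion-exclusion then produces $I_{G_1'[A \cup B]}(x) = I_{G_1[A]}(x) + (1+x)(I_{H_1[B]}(x) - 1)$.

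By Theorem \ref{thm:ind_poly}, the hypotheses $\ol{X}_{H_1} = \ol{X}_{H_2}$ and $\ol{X}_{G_1 - v_1} = \ol{X}_{G_2 - v_2}$ supply bijections $\phi_H$ and $\phi_{G-v}$, each preserving independence polynomials, between induced subgraphs of the corresponding graphs. The step I expect to be the main obstacle is extracting an analogous bijection $\phi_G^v$ between induced subgraphs of $G_1$ containing $v_1$ and induced subgraphs of $G_2$ containing $v_2$: I would observe that the multiset of independence polynomials of all induced subgraphs of $G_1$ equals that for $G_2$ by $\ol{X}_{G_1} = \ol{X}_{G_2}$, while the sub-multisets coming from subgraphs avoiding $v_1$ and $v_2$ already agree by $\ol{X}_{G_1 - v_1} = \ol{X}_{G_2 - v_2}$, so the multiset difference is well-defined and yields the desired bijection. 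Gluing these three bijections into a map $\Phi$ that sends $A \cup B \mapsto \phi_{G-v}(A) \cup \phi_H(B)$ when $v_1 \notin A$ and $A \cup B \mapsto \phi_G^v(A) \cup \phi_H(B)$ when $v_1 \in A$ produces a bijection between induced subgraphs of $G_1'$ and $G_2'$ that matches case to case and preserves independence polynomials by the two formulas above, after which Theorem \ref{thm:ind_poly} finishes the proof.
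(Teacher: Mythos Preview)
Your argument is correct and takes a genuinely different route from the paper's. The paper works entirely inside the ring $\ol{\widetilde{\Lambda}}$ with the $\odot$ product: it introduces an auxiliary generating function $f(v,G)$ recording stable set covers in which $\{v\}$ is not used alone, proves an identity $\ol{X}_G = \ol{X}_{G-v}\odot\m_1 + f(v,G)\odot(\m_1+1)$ showing that $f(v,G)$ is determined by $\ol{X}_G$ and $\ol{X}_{G-v}$, and then derives a closed formula $\ol{X}_{G'} = (f(v,H\sqcup v)+\ol{X}_H)\odot(f(v,G)+\ol{X}_{G-v})\odot(\m_1+1) - \ol{X}_H\odot\ol{X}_{G-v}$ by decomposing stable set covers of $G'$ according to how $v$ is covered. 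Your proof bypasses all of this machinery by invoking Theorem~\ref{thm:ind_poly} directly and computing the independence polynomial of each induced subgraph $G_1'[A\cup B]$ explicitly; the key multiset-difference step you flag (extracting $\phi_G^v$ from the two hypotheses $\ol{X}_{G_1}=\ol{X}_{G_2}$ and $\ol{X}_{G_1-v_1}=\ol{X}_{G_2-v_2}$) is the combinatorial analogue of the paper's algebraic inversion of $(\m_1+1)$ to solve for $f(v,G)$. Your approach is shorter and more elementary, while the paper's approach yields a reusable closed formula for $\ol{X}_{G'}$ in the $\m$-basis, which is the style of computation the paper goes on to exploit again in the proof of Proposition~\ref{prop:attach_to_one_vertex}.
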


\begin{proof}
To prove Theorem~\ref{thm:equalKotherV}, we first define the following function:
\begin{definition}
    Let $$f(v, G):= \sum_{C\in {\textsf{SSC}}(v,G)}\m_{\lambda(C)},$$ where ${\textsf{SSC}}(v,G) \subset {\textsf{SSC}}(G)$ is the set of SSCs $C$ of $G$ such that $\{v\}\not\in C$ (i.e. $v$ is covered only by stable sets of size greater than 1).
\end{definition}

\begin{lemma}\label{lemma:f(v, G)}
    $f(v, G)$ can be determined from $\ol{X}_{G}$ and $\ol{X}_{G - v}$.
\end{lemma}
\begin{proof}
    We claim that 
    \begin{equation}\label{eq:f(v,G)}
        \ol{X}_{G} = \ol{X}_{G - v} \odot \ol{\widetilde{m}}_1 + f(v, G) \odot (\ol{\widetilde{m}}_1 + 1).
    \end{equation} To see this, consider the $\m$-expansion of $\ol{X}_G$ from Proposition \ref{prop:crewssc}, and note that every $C\in {\textsf{SSC}}(G)$ falls under one of three cases:
    \begin{itemize}
        \item \tb{Case 1:} $\{v\}\in C$, and $v$ is in no other stable set in $C$. In this case, we can uniquely write $C = C'\sqcup \{\{v\}\}$ with $C'\in {\textsf{SSC}}(G-v)$. Then $\lambda(C) = \lambda(C')\sqcup 1,$ so $\m_{\lambda(C)}=\m_{\lambda(C')}\odot \m_1$. Thus, each term in the $\m$-expansion of $\ol{X}_G$ that falls under this case is the $\odot$ product of $\m_1$ and a term in the $\m$-expansion of $\ol{X}_{G-v}$, so the sum of the $\m$-terms for this case is $\ol{X}_{G-v}\odot \m_1.$
        \item \tb{Case 2:} $\{v\}\not\in C$. The sum of the $\m$-terms for this case is $f(v,G)$ by definition.
        \item \tb{Case 3:} $\{v\}\in C$, and $v$ is also in another stable set in $C$. Each SSC $C$ in this case is of the form $C = C'\sqcup \{\{v\}\}$ with $C'\in {\textsf{SSC}}(v,G).$ Then $\lambda(C) = \lambda(C')\sqcup 1,$ so $\m_{\lambda(C)}=\m_{\lambda(C')}\odot \m_1.$ Thus, the sum of $\m$-terms for this case is $f(v,G)\odot \m_1$ by a similar argument to Case 1.
    \end{itemize}
    Combining these cases proves (\ref{eq:f(v,G)}). Now in $\ol{\widetilde{\Lambda}},$ we can write $$(1+\m_1)\odot(1-\m_1 + \m_{11}-\m_{111}+\dots)=1,$$ since $\m_{1^k}$ is the $k$th power of $\m_1$ under $\odot$ multiplication. Thus, we can solve for $f(v,G)$ as $$f(v,G) = (\ol{X}_G - \ol{X}_{G-v}\odot \m_1)\odot (1-\m_1 + \m_{11}-\m_{111}+\dots),$$ which lets us calculate $f(v,G)$ given $\ol{X}_G$ and $\ol{X}_{G-v}$.
\end{proof}

\begin{lemma}\label{lemma:graphG'}
    If we form $G'$ by attaching $H$ to all vertices of $G$ except $v$, then
    $$\ol{X}_{G'} = (f(v,H\sqcup v)+\ol{X}_H)\odot (f(v,G)+\ol{X}_{G-v})\odot (\m_1+1)-\ol{X}_H\odot \ol{X}_{G-v}$$
\end{lemma}

\begin{proof}
    Note that no vertex of $G-v$ can be in a stable set that also includes vertices from $H$, so we can essentially split each stable set cover $C\in {\textsf{SSC}}(G)$ into three parts:
    \begin{enumerate}
        \item Stable sets involving vertices from $H$ (and possibly $v$ in addition to those vertices). 
        \item Stable sets involving vertices from $G-v$ (and again possibly $v$).
        \item The stable set $\{v\}$ by itself, if it is used.
    \end{enumerate}
    We can essentially choose the stable sets used in each of these three parts independently of each other and then take the full SSC to be the disjoint union of those collections of stable sets, except that we will need to subtract cases where none of the three parts includes a stable set covering $v.$ We can thus separately find the generating series for the possible $\m$-terms enumerating the stable set sizes in each part, then take the $\odot$ product of those three generating series, and then subtract the $\m$-terms corresponding to cases where $v$ does not get covered by any of the stable sets. We consider these three generating series one by one:
    \begin{enumerate}
        \item For stable sets covering $H$, all vertices of $H$ must be covered, and $v$ can either be covered in a stable set that also includes other vertices of $H$, or it could be not covered at all. The former cases corresponds to the $\m$-terms in $f(v,H\sqcup v)$, and the latter to the $\m$-terms in $\ol{X}_G$, so combining those two cases, the generating series is $f(v,H\sqcup v)+\ol{X}_H.$
        
        \item For stable sets covering $G-v$, by similar reasoning, if $v$ is also covered by one of the stable sets, we get an $\m$-term from $f(v,G)$, and if $v$ is not covered, we get an $\m$-term from $\ol{X}_{G-v}$, so the sum of all the possible $\m$-terms is $f(v,G)+\ol{X}_{G-v}.$
        
        \item Finally, we can either include the singleton $\{v\}$ or not. In the former case, we get an $\m_1$ from $\{v\}$, and in the latter case we simply get 1 from the empty set.
    \end{enumerate}
    After taking the $\odot$ product of these three parts (which corresponds to taking the disjoint union of the associated stable set collections), the terms we need to subtract are the ones where in all three cases, the stable sets used do not cover $v.$ This means we need to take an $\m$-term from $\ol{X}_H$ in the first factor, an $\m$-term from $\ol{X}_{G-v}$ in the second factor, and the 1 from the third factor. Subtracting those cases not covering $v$ from the $\odot$ product of the generating series for the three parts proves Lemma \ref{lemma:graphG'}.
\end{proof}

Now to complete the proof of Theorem \ref{thm:equalKotherV}, it suffices to show that the expression in Lemma \ref{lemma:graphG'} is the same for $G_1'$ as for $G_2'$. Almost all the terms can be immediately matched up based on our assumptions and Lemma \ref{lemma:f(v, G)}. The one part to check is that $f(v_1,H_1\sqcup v_1) = f(v_2,H_2\sqcup v_2).$ Since $\ol{X}_{H_1}=\ol{X}_{H_2}$, this follows as long as in general, $f(v,H\sqcup v)$ can be determined from $\ol{X}_H$, which it can because by Lemma \ref{lemma:f(v, G)}, it depends only on $\ol{X}_H$ and $\ol{X}_{H\sqcup v}$, and $\ol{X}_{H\sqcup v}$ can be determined from $\ol{X}_H$ since $\ol{X}_{H\sqcup v} = \ol{X}_H \ol{X}_{\{v\}}=\ol{X}_G\m_1.$ (Note that we are using normal multiplication in this last equation, not $\odot$ multiplication.)
\end{proof}

\begin{example}
    For each of the four graph pairs $(G_1,G_2)$ with equal KSFs from the start of this section, we list all vertex pairs $(v_1,v_2)$ to which Theorem \ref{thm:equalKotherV} applies, i.e. all pairs $(v_1,v_2)$ such that $\ol{X}_{G_1-v_1}=\ol{X}_{G_2-v_2}.$ Note that since these graphs have $8$ vertices, which we found to be the minimum number of vertices such that two nonisomorphic graphs can have the same KSF, the 7-vertex graphs $G_1-v_1$ and $G_2-v_2$ must actually be isomorphic in every case in order to have the same KSF.
    \begin{itemize}
        \item \tb{1st graph pair:} The vertex pairs $(1,1)$ and $(3,2)$ work.
        \item \tb{2nd graph pair:} The vertex pairs $(5, 1),$ $(5,5),$ $(6,4),$ and $(6,7)$ work.
        \item \tb{3rd graph pair:} No vertex pairs $(v_1,v_2)$ work.
        \item \tb{4th graph pair:} The vertex pairs $(2,2),$ $(4,1)$, $(6,2),$ and $(8,1)$ work.
    \end{itemize}
\end{example}

Now we give a generalization of this construction, where we connect the new graphs $H_1$ and $H_2$ to all but $k$ vertices of $G$ and $G_2$, such that those $k$ vertices form cliques $C_1\subseteq G_1$ and $C_2 \subseteq G_2$ to form $G_1'$ and $G_2'$:

\begin{theorem}\label{cliqueKvertex}
    Consider graphs $G_1, H_1, G_2, H_2$. Assume that we have a set of vertices in $G_1$ that form a clique $C_1$ and we have a set of vertices in $G_2$ that form a clique $C_2\se G_2$ with $|C_2|=|C_1|$. Let $G_1'$ be the graph formed by connecting all vertices in $H_1$ to all vertices in $G_1$ that are not part of $C_1$. Similarly, define $G_2'$. Then, if for all $0 \le j \le |C_1|$, $$\sum_{S \subseteq C_1, |S| = j} \ol{X}_{G_1 - S} = \sum_{S \subseteq C_2, |S| = j} \ol{X}_{G_2 - S}$$ and we also have $\ol{X}_{H_1} = \ol{X}_{H_2}$, then we have $\ol{X}_{G_1'} = \ol{X}_{G_2'}$.
\end{theorem}

\begin{proof}
We first define a generalized version of the function $f$ that we previously defined:
\begin{definition}
    Define $f(C, G)$ be the generating series for ways to cover $G$ with stable sets such that all vertices in $C$ are only in stable sets of size greater than $1$.
\end{definition}

We can write $\ol{X}_{G_1'}$ and $\ol{X}_{G_2'}$ in terms of this function $f$ using the following lemma:

\begin{lemma}\label{lem:KSF_from_f(C,G)}
    Assume $G'$ is formed from $G$ by connecting all vertices of $H$ to the vertices of $G$ not in a clique $C \se V(G)$. Then $$\ol{X}_{G'} = \sum_{i=0}^{|C|}\left(\sum_{S'\se C,|S'|=i}f(C-S',G-S')\right)\odot \left(\sum_{j=i}^{|C|}\binom{|C|-i}{j-i}\m_{1^j}\ol{X}_H\right).$$
\end{lemma}

\begin{proof}
    We use casework on the set $S\se C$ of vertices in $C$ covered by at least one stable set involving no other vertices of $G$ (but optionally also by a stable set involving things in $C$), so every vertex in $S$ is either covered as a singleton or in a stable set involving vertices in $H$. Assume $|S|=j$. Let $S'\se S$ be the set of vertices in $S$ that are not covered by a stable set involving other vertices in $G$, and assume $|S'|=i\le j.$ Thus, all vertices in $C-S'$ are covered only in stable sets also involving other vertices in $G$. In fact, these stable sets can only involve vertices in $G-S'$, since all vertices in $S'$ are adjacent to all vertices in $C-S'$ (as $C$ is a clique). We can thus build an SSC for a given choice of $S$ and $S'$ as follows:
    \begin{itemize}
        \item We can first cover the vertices of $G-S'$ in such a way that all vertices in $C-S'$ are in stable sets of size greater than 1, and the generating for ways to do that is $f(C-S',G-S')$ by definition.
        \item Then we need to cover $S\sqcup H$ with no restrictions. The generating series for ways to do that is $\ol{X}_{S\sqcup H} = \ol{X}_S\ol{X}_H=\m_{1^j}\ol{X}_H$, since the vertices in $S$ form a clique, so the only way to cover them is with stable sets of size 1, giving $\ol{X}_S = \m_{1^j}$.
    \end{itemize}
    Thus, for a particular choice of $S$ and $S'$ with $|S|=j$, the generating series is $$f(C-S',G-S')\odot (\m_{1^j}\ol{X}_H).$$ Now we need to sum over all choices of $S$ and $S'$. Note that our expression above depends on $S'$, but not on $S$ as long as $|S|=j$. Thus, we can group our terms by first choosing $S'$ and then choosing $S$. For a given $S'$, we can choose any $S$ such that $S'\se S\se C$. Thus, if $|S'|=i$ and we want $|S|=j$, there are $\binom{|C|-i}{j-i}$ choices for $S$, because we need to choose $j-i$ more elements to be in $S$ out of the $|C|-i$ remaining elements in $C$, and for each such $S$, we get an $f(C-S',G-S')\odot (\m_{1^j}\ol{X}_H)$ term. Summing over the possible subsets $S'$ grouped by the size $|S'|=i$ gives the expression in Lemma \ref{lem:KSF_from_f(C,G)}.
\end{proof}

Now we show that these $f$ terms can be written in terms of the sums of KSFs showing up in Theorem \ref{cliqueKvertex}:

\begin{lemma}\label{lem:f's_using_X_(G-S)'s}
    For each $0\le i\le |C|,$ the sum $$\sum_{S'\se C,|S'|=i}f(C-S',G-S')$$ can be written in terms of sums of the form $\sum_{S\se C,|S|=j}\ol{X}_{G-S}.$
\end{lemma}

\begin{proof}
    We use ``reverse induction" on $i$, going from $i=|C|$ down to $i=0$. \\
    \\
    \tb{Base case:} If $i=|C|$, the only possibility is $S'=C$, so our sum becomes $$\sum_{S'\se C,|S'|=|C|}f(C-S',G-S')=f(\varnothing,G-C)=\ol{X}_{G-C}=\sum_{S\se C,|S|=|C|}\ol{X}_{G-S}.$$
    \tb{Inductive step:} Assume the statement holds for $i+1,i+2,\dots,|C|.$ We want to show that it also holds for $i$. For each $S'\se C$ with $|S'|=i$, plugging in $G-S'$ in place of $G$, $C-S'$ in place of $C$, and $H=\varnothing$ in Lemma \ref{lem:KSF_from_f(C,G)} gives $$\ol{X}_{G-S'}=\sum_{i'=0}^{|C|-i}\left(\sum_{S''\se C-S',|S'|=i'}f(C-S'-S'',G-S'-S'')\right)\odot \left(\sum_{j=i'}^{|C|-i}\binom{|C|-i-i'}{j-i'}\m_{1^j}\right).$$ Using the binomial theorem for $\odot$ multiplication, the right sum becomes $$\m_{1^{i'}}\odot(\m_1+1)^{\odot (|C|-i-i')},$$ where $f^{\odot n}$ denotes the $\odot$ product of $n$ copies of $f$ with itself. Summing over all $S'$ with $|S'|=i$ and using the change of variables $S=S'\sqcup S''$ and $j=i+i'$, we get $$ \sum_{S'\se C,|S'|=i}\ol{X}_{G-S'}=\sum_{j=i}^{|C|}\left(\sum_{S'\se S \se C,|S|=j}f(C-S,G-S)\odot \m_{1^{j-i}}\odot (\m_1+1)^{\odot(|C|-j)}\right).$$ If we group the terms on the right side by $S$, then for each choice of $S$ with $|S|=j$, there are $\binom ji$ choices for $S'$, so the sum becomes $$\sum_{S'\se C,|S'|=i}\ol{X}_{G-S'}=\sum_{j=i}^{|C|}\left(\sum_{S \se C,|S|=j}f(C-S,G-S)\right)\odot \left(\binom ji\m_{1^{j-i}}\odot (\m_1+1)^{\odot(|C|-j)}\right).$$ The inductive hypothesis implies that for each $i+1<j<|C|$, the sum $$\sum_{S\se C,|S|=j}f(C-S,G-S)$$ can be written in terms of sums of KSFs of the desired form. Rearranging to isolate the $j=i$ terms shows that the sum for $j=i$ can also be written in terms of sums of KSFs that form, since the sum on the left side is also a sum of KSFs of that form. This completes the induction.
\end{proof}

Combining Lemmas \ref{lem:KSF_from_f(C,G)} and \ref{lem:f's_using_X_(G-S)'s} completes the proof of Theorem \ref{cliqueKvertex}.
\end{proof}

We will now give some examples of Theorem \ref{cliqueKvertex} for $|C_1|=|C_2|=2.$  Note that if $|C_i|=2$ and $C_i=\{u_i,v_i\}$ for $i=1,2$, the relevant sums of KSFs are $\ol{X}_{G_i}$ for $j=0$, $\ol{X}_{G_i-u_i}+\ol{X}_{G_i-v_i}$ for $j=1$, and $\ol{X}_{G_i-\{u_i,v_i\}}$ for $k=2$. Assume $\ol{X}_{G_1}=\ol{X}_{G_2}$, $\ol{X}_{G_1-\{u_1,v_1\}}=\ol{X}_{G_2-\{u_2,v_2\}}$, and that $\ol{X}_{G_1'}=\ol{X}_{G_2'}$ for $H=K_1$ a single vertex. Then it automatically follow that $\ol{X}_{G_1-u_1}+\ol{X}_{G_1-v_1}=\ol{X}_{G_2-u_2}+\ol{X}_{G_2-v_2}$, because Lemmas \ref{lem:KSF_from_f(C,G)} and \ref{lem:f's_using_X_(G-S)'s} let us write $\ol{X}_{G_i'}$ in terms of $\ol{X}_{G_i}$, $\ol{X}_{G_i-u_i}+\ol{X}_{G_i-v_i}$, and $\ol{X}_{G_i-\{u_i,v_i\}}$ in a way that depends on all three of these terms. Thus, we can isolate $\ol{X}_{G_i-u_i}+\ol{X}_{G_i-v_i}$ in these sums to show that $\ol{X}_{G_1-u_1}+\ol{X}_{G_1-v_1}=\ol{X}_{G_2-u_2}+\ol{X}_{G_2-v_2}$ assuming all the other terms. 

\begin{example}
    For our graphs from the start of this section, this lets us apply Theorem \ref{cliqueKvertex} to the following combinations of vertex pairs $((u_1,v_1),(u_2,v_2))$:
    \begin{itemize}
        \item \tb{1st graph pair:} None.
        \item \tb{2nd graph pair:} $((1,3),(4,8)),$ $((1,3),(7,8))$.
        \item \tb{3rd graph pair:} $((1,5),(1,5)),$ $((3,8),(3,8)).$
        \item \tb{4th graph pair:} None.
    \end{itemize}
\end{example}

Next, we give another similar example of a way to use one of the above graph pairs to construct more graph pairs with equal KSF:

\begin{prop}\label{prop:attach_to_one_vertex}
    Consider graphs $G_1$ and $G_2$ from the 2nd pair with equal KSFs. Take any graphs $H_1$ and $H_2$ with $\ol{X}_{H_1} = \ol{X}_{H_2}$, and build $G_1'$ by connecting all vertices of $H_1$ to only vertex 5 in $G_1$, and $G_2'$ by connecting all vertices of $H_2$ to only vertex 5 in $G_2.$ Then $\ol{X}_{G_1'} = \ol{X}_{G_2'}.$
\end{prop}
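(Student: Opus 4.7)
The plan is to adapt the SSC-decomposition technique of Lemma~\ref{lemma:graphG'} to the present construction, where $H$ is attached to only vertex~$5$ of $G$ rather than to all of $V(G - 5)$. First observe that by the preceding example, the 2nd pair satisfies the vertex pairing $(5, 5)$ of Theorem~\ref{thm:equalKotherV}, so $\ol{X}_{G_1 - 5} = \ol{X}_{G_2 - 5}$ in addition to the hypothesized $\ol{X}_{G_1} = \ol{X}_{G_2}$ and $\ol{X}_{H_1} = \ol{X}_{H_2}$. By Lemma~\ref{lemma:f(v, G)}, these equalities determine $f(5, G_i)$ from $\ol{X}_{G_i}$ and $\ol{X}_{G_i - 5}$, and (by the remark at the end of the proof of Theorem~\ref{thm:equalKotherV}) $f(5, H_i \sqcup 5)$ from $\ol{X}_{H_i}$ alone, so all of these auxiliary quantities also agree across $i = 1, 2$.

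Next, classify each stable set of $G_i'$ by its interaction with vertex~$5$, $V(G_i - 5)$, and $V(H_i)$: (a)~sets containing~$5$, of the form $\{5\} \cup R$ with $R \subseteq V(G_i - 5)$; (b)~non-$5$ sets contained in $V(G_i - 5)$; (c)~non-$5$ sets contained in $V(H_i)$; and (d)~non-$5$ ``mixed'' sets $S_G \cup S_H$ with $\varnothing \neq S_G \subseteq V(G_i - 5)$ and $\varnothing \neq S_H \subseteq V(H_i)$. Following the three-part organization of Lemma~\ref{lemma:graphG'}, decompose each SSC $C$ into Part~1 (sets with an $H_i$-component, i.e.\ types (c) and (d), which must cover $V(H_i)$), Part~2 (pure $V(G_i - 5)$ sets, type~(b)), and Part~3 (sets containing~$5$, type~(a), which must cover vertex~$5$), subject to the joint constraint that Parts 1+2+3 cover $V(G_i - 5)$. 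Write a generating series in $\ol{\widetilde{\Lambda}}$ for each part and combine them, using inclusion-exclusion on the $V(G_i - 5)$-coverage constraint, to derive a closed form for $\ol{X}_{G_i'}$ in terms of $\ol{X}_{G_i}, \ol{X}_{G_i - 5}, \ol{X}_{H_i}, f(5, G_i)$, and $f(5, H_i \sqcup 5)$. Since each ingredient agrees across $i = 1, 2$, the formula yields $\ol{X}_{G_1'} = \ol{X}_{G_2'}$.

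The main obstacle is handling the type~(d) mixed stable sets, which are absent in Lemma~\ref{lemma:graphG'}'s setting because there full-bipartite attachment between $V(G - v)$ and $V(H)$ forbids any stable set from mixing the two. Here, $V(G_i - 5)$ and $V(H_i)$ are non-adjacent in $G_i'$, so mixed sets exist and couple the two sides: Part~1's generating series depends non-trivially on both $V(G_i - 5)$-structure and $\ol{X}_{H_i}$. The anticipated resolution is an inclusion-exclusion over which subset $U \subseteq V(G_i - 5)$ is covered by Part~1's mixed-set $G$-components: conditioning on $U$ decouples the $H$-side contribution (expressible purely via $\ol{X}_{H_i}$, analogous to the $f(v, H \sqcup v) + \ol{X}_H$ factor in Lemma~\ref{lemma:graphG'}) from the $G$-side contributions coming from Parts~2 and~3 (expressible via $\ol{X}_{G_i - 5}$ and $f(5, G_i)$). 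This inclusion-exclusion step—showing that after summing over $U$ the resulting expression depends only on $\ol{X}_{G_i}, \ol{X}_{G_i - 5}, \ol{X}_{H_i}$—is the technical heart of the proof.
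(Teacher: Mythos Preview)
Your plan has a genuine gap: the closed form you claim does not exist. You assert that $\ol{X}_{G_i'}$ can be written purely in terms of $\ol{X}_{G_i}$, $\ol{X}_{G_i-5}$, $\ol{X}_{H_i}$, $f(5,G_i)$, and $f(5,H_i\sqcup 5)$, but the inclusion--exclusion you propose over subsets $U\subseteq V(G_i-5)$ will produce terms that depend on the KSFs of \emph{many} induced subgraphs of $G_i-5$ (one for each relevant $U$), not just on $\ol{X}_{G_i-5}$ as a whole. Knowing $\ol{X}_{G_i}$ and $\ol{X}_{G_i-5}$ is not enough to recover these. Two smaller symptoms of the same confusion: (i) the quantity $f(5,H_i\sqcup 5)$ is irrelevant here, since vertex~$5$ is adjacent to every vertex of $H_i$ in $G_i'$, so no stable set can mix $5$ with $H_i$; (ii) your split of the non-$5$ stable sets into ``pure-$G$'', ``pure-$H$'', and ``mixed'' is unnecessary, because together they form an SSC of the disjoint union $(G_i-S)\sqcup H_i$ for the appropriate $S$, and $\ol{X}_{(G_i-S)\sqcup H_i}=\ol{X}_{G_i-S}\cdot\ol{X}_{H_i}$ already handles the mixed sets automatically.

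What the paper actually does is exploit a very specific feature of vertex~$5$ in the 2nd pair: it has exactly two non-neighbors $u_i,v_i$ in $G_i$, and these are adjacent, so the only stable sets of $G_i'$ containing $5$ are $\{5\}$, $\{5,u_i\}$, $\{5,v_i\}$. This collapses your ``Part~3'' to three concrete options and yields a four-case analysis according to which of $u_i,v_i$ are covered \emph{only} via $5$. The resulting formula for $\ol{X}_{G_i'}$ involves not just $\ol{X}_{G_i-5}$ but also $\ol{X}_{G_i-\{5,u_i,v_i\}}$ and the sum $\ol{X}_{G_i-\{5,u_i\}}+\ol{X}_{G_i-\{5,v_i\}}$. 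The paper then checks directly that $G_1-\{5,u_1,v_1\}\cong G_2-\{5,u_2,v_2\}$, and recovers the needed equality of the sums by specializing the formula to $H_i=\varnothing$ and using $\ol{X}_{G_1}=\ol{X}_{G_2}$. None of these extra structural verifications appear in your outline, and without them the argument does not close.
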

\begin{proof}
    We will again show that the graphs have the same $\m$-expansion. In $G_1,$ the stable sets involving $5$ are $\{5\}$, $\{1,5\}$, and $\{3,5\}$ and in $G_2,$ they are $\{5\},$ $\{4,5\}$, and $\{5,8\}.$ For each of $i=1,2,$ we can split the SSCs of $G_i'$ into cases based on which non-neighbors of 5 are covered \emph{only} by a stable set involving 5 and not by any other stable sets. Write $u_i$ and $v_i$ for the two non-neighbors of 5 in $G_i$, so we can take $u_1 = 1$, $v_1 = 3,$ $u_2 = 4,$ and $v_2 = 8.$
    \begin{itemize}
        \item \tb{Case 1:} $u_i$ is only covered by $\{5,u_i\}$, but $v_i$ is covered by some stable set besides $\{5,v_i\}$. In this case, an SSC of $G_i'$ can be broken into the following parts:
        \begin{enumerate}
            \item Some SSC of $(G_i-\{5,u_i\})\sqcup H_i$, where $G_i-\{5,u_i\}$ is the graph formed from $G_i$ by deleting the two vertices 5 and $u_i.$ We take a disjoint union with $H_i$ since $H_i$ has no neighbors in $G_i-\{5,u_i\}$, as its only neighbor in $G_i$ was 5. The sum of the $\m$-terms associated to these stable set covers is thus $\ol{X}_{(G_i-\{5,u_i\})\sqcup H_i} = \ol{X}_{G_i-\{5,u_i\}} \ol{X}_{H_i}$ (where we are using normal multiplication rather than $\odot$ multiplication).
            \item The required set $\{5,u_i\}$ of size 2, which gives an $\m_2.$
            \item Possibly the stable set $\{5\}$, which is optional since 5 is already covered by $\{5,u_i\}$. We get an $\m_1+1$ for the options of including $\{5\}$ or not.
            \item Possibly the stable set $\{5,v_i\}$, which is also optional since both 5 and $v_i$ are already covered. We get an $\m_2+1$ for the options of including $\{5,v_i\}$ or not.
        \end{enumerate}
        A full SSC is the disjoint union of the partial SSCs for each of these 4 parts, so we can get the generating series for the possible SSCs by taking the $\odot$ product of the generating series for the parts, giving $$(\ol{X}_{G_i - \{5,u_i\}}\ol{X}_{H_i})\odot \m_2 \odot (\m_{1} + 1) \odot (\m_{2} + 1).$$
        
        \item \tb{Case 2:} $u_i$ is covered by a stable set besides $\{5,u_i\}$, but $v_i$ is covered only by the stable set $\{5,v_i\}$. By identical reasoning to Case 1, the generating series for SSCs in this case is $$(\ol{X}_{G_i - \{5,v_i\}}\ol{X}_{H_i}) \odot \m_2 \odot (\m_{1} + 1) \odot (\m_{2} + 1).$$

        \item \tb{Case 3:} Both $u_i$ and $v_i$ are covered by stable sets not involving 5 (but optionally also by $\{5,u_i\}$ and $\{5,c_i\}$). An SSC of $G_i'$ in this case can be broken into two parts:
        \begin{enumerate}
            \item Some SSC of $(G_i-5)\sqcup H_i,$ since all vertices of $G_i-5$ must be covered by a stable set not involving 5. The generating series for these SSCs is $\ol{X}_{(G_i-5)\sqcup H_i} = \ol{X}_{G_i-5}\ol{X}_{H_i}.$
            \item At least one of $\{5\}$, $\{5,u_i\}$, or $\{5,v_i\}$, to ensure that 5 is covered. We can essentially choose independently whether or not to use each of these three stable sets, giving $(\m_1+1)\odot(\m_2+1)\odot(\m_2+1)$, where the $\m_1+1$ represents the option of using $\{5\}$ or not, and the two $\m_2+1$ factors represent the options of using $\{5,u_i\}$ or not, and of using $\{5,v_i\}$ or not. However, we need to subtract 1, since that represents the case where none of the three sets is used.
        \end{enumerate}
        Putting this together, the generating series for SSCs in this case is $$(\ol{X}_{G_i-5}\ol{X}_{H_i})\odot ((\m_{1} + 1) \odot (\m_{2} + 1) \odot (\m_{2} + 1) - 1).$$
        
        \item \tb{Case 4:} Both $u_i$ and $v_i$ are covered only by the stable sets $\{5,u_i\}$ and $\{5,v_i\}$ involving 5. An SSC of $G_i'$ in this case can be broken into the following parts:
        \begin{enumerate}
            \item An SSC of $(G_i-\{5,u_i,v_i\})\sqcup H_i.$ The generating series for these is $\ol{X}_{G_i-\{5,u_i,v_i\}}\ol{X}_{H_i}.$
            \item The required stable set $\{5,u_i\}$ covering $u_i$, giving an $\m_2.$
            \item The required stable set $\{5,v_i\}$ covering $v_i$, giving another $\m_2.$
            \item Optionally the stable set $\{5\}$, giving an $\m_1+1.$
        \end{enumerate}
        Putting this together, we get $$(\ol{X}_{G_i-\{5,u_i,v_i\}}\ol{X}_{H_i})\odot \m_2\odot \m_2 \odot (\m_1+1).$$ as the generating series for SSCs in this case.
    \end{itemize}
    To prove Proposition \ref{prop:attach_to_one_vertex}, we must now check that summing these four cases to get $\ol{X}_{G_i'}$ gives the same result for $i=1,2.$ Note first that if we delete $5$ from both $G_1$ and $G_2$, the resulting graphs are isomorphic, so $\ol{X}_{G_1-5}=\ol{X}_{G_1-5}.$ This implies that the generating series for Case 3 is the same for both graphs, since we also assumed that $\ol{X}_{H_1}=\ol{X}_{H_2}$. Similarly, we can also verify that the graphs formed by deleting $5,u_i,$ and $v_i$ from $G_i$ are isomorphic for $i=1,2$, so $\ol{X}_{G_1-\{5,u_1,v_1\}} = \ol{X}_{G_2-\{5,u_2,v_2\}},$ and thus the generating series for Case 4 is the same for both graphs. Thus, it suffices to show that the sum of the generating series from Cases 1 and 2 is also the same. Since both $\odot$ multiplication and normal multiplication distribute over addition, we can factor the sum as $$((\ol{X}_{G_i-\{5,u_i\}}+\ol{X}_{G_i-\{5,v_i\}})\ol{X}_{H_i}) \odot \m_2 \odot (\m_1 + 1) \odot (\m_2 + 1).$$ Thus, it suffices to have $$\ol{X}_{G_1-\{5,u_1\}} + \ol{X}_{G_1-\{5,v_1\}} = \ol{X}_{G_2-\{5,u_2\}} + \ol{X}_{G_2-\{5,v_2\}}.$$ In this case, none of the graphs $G_i-\{5,u_i\}$ and $G_i-\{5,v_i\}$ are isomorphic to each other, but we can still show that their sum must be the same. Setting $H_i=\varnothing,$ it follows from the fact that $\ol{X}_{G_1}=\ol{X}_{G_2}$ (which we checked with our code) together with the fact that the Case 3 and Case 4 parts match up, that $(\ol{X}_{G_i-\{5,u_i\}}+\ol{X}_{G_i-\{5,v_i\}})\odot \m_2 \odot(\m_1 + 1)\odot(\m_2 + 1)$ must be the same for both graphs. Then it follows from the fact that $\ol{\widetilde{\Lambda}}$ is an integral domain that the sum $\ol{X}_{G_i-\{5,u_i\}}+\ol{X}_{G_i-\{5,v_i\}}$ must also be the same for $i=1,2,$ completing the proof of Proposition \ref{prop:attach_to_one_vertex}.
\end{proof}
We can generalize Proposition~\ref{prop:attach_to_one_vertex} to give another sufficient condition for building graphs with equal KSF by connecting new graphs to all but one vertex of the existing graphs:
\begin{prop}\label{prop:OneVertex_General}
    Let $G_1, G_2$ be graphs such that $\overline{X}_{G_1} = \overline{X}_{G_2}$, and let $H_1, H_2$ be graphs such that $\overline{X}_{H_1} = \overline{X}_{H_2}$. Consider a vertex $v_{1} \in V(G_1)$ such that the subgraph of $G_1$ induced by all vertices not adjacent to $v_1$ forms a clique $C_1$ of size $k$. Similarly, let $v_2 \in V(G_2)$ be a vertex such that the subgraph of $G_2$ induced by all vertices not adjacent to $v_2$ forms a clique $C_2$ of the same size $k$. Let $G_1'$ be the graph obtained by connecting every vertex of $H_1$ to the vertex $v_1$ in $G_1$ (and to no other vertex of $G_1$). Similarly, let $G_2'$ be the graph obtained by connecting every vertex of $H_2$ to the vertex $v_2$ in $G_2$ (and to no other vertex of $G_2$). If  for every $0\le j\le k$,
    $$\sum_{|S|\subseteq C_1, |S| = j} \overline{X}_{G_{1} - (\{v_{1}\} \cup S)} = \sum_{S\subseteq C_2, |S| = j}\overline{X}_{G_{2} - (\{v_{2}\} \cup S)},$$ then $\overline{X}_{G_1'} = \overline{X}_{G_2'}$.
\end{prop}

\begin{proof}
    We will show that the graphs have the same $\m$ expansion. Let the vertices not adjacent to $v_1$ in $G_1$ be $v_{1, 1}, v_{1, 2}, \ldots, v_{1, k}$. We will use casework on how many of $v_{1,1},\dots, v_{1,k}$ are covered only by stable sets involving $v_1.$ Without loss of generality, we may assume that $v_{1, 1}, v_{1, 2}, \ldots, v_{1, j}$ are covered only by stable sets involving $v_1$, while $v_{1, j + 1}, v_{1, j + 2}, \ldots, v_{1, k}$ are covered by other stable sets, not just stable sets involving $v_1$. We first consider the case where $j \neq 0$; the case $j = 0$ requires separate treatment.

    To form a stable set cover of $G_1'$, we must include the stable sets $\{v_{1}, v_{1, 1}\}, \{v_{1}, v_{1, 2}\}, \ldots,$ $\{v_1, v_{1, j}\}$. This contributes $\m_{2^{j}}$ to the expansion. Additionally, we need to cover the remaining vertices $$H_1 \sqcup (G_1 - \{v_{1}, v_{1, 1}, v_{1, 2}, \ldots, v_{1, k}\}),$$ and the generating series for ways to do this is $\overline{X}_{H_1} \overline{X}_{G_1 - \{v_{1}, v_{1, 1}, v_{1, 2}, \ldots, v_{1, j}\}}$, because we know no vertices in $H$ are adjacent to any vertex except $v_1,$ and KSFs multiply over disjoint unions. We may optionally also include some of the stable sets $\{v_{1}\}, \{v_1,v_{1, j + 1}\}, \ldots, \{v_1,v_{1, k}\}$, which contributes $$(\m_{1} + 1) \odot \underbrace{(\m_{2} + 1) \odot \ldots \odot (\m_{2} + 1)}_{k - j \text{ times}} = (\m_{1} + 1) \odot (\m_{2} + 1)^{\odot (k - j)},$$ where the notation $f^{\odot n}$ means the $\odot$ product of $f$ with itself $n$ times, and each term represents the choice to include one of those stable sets or not.
    
    Combining these contributions, for $j \neq 0$ we obtain:
    $$\m_{2^{j}} \odot (\overline{X}_{H_{1}} \overline{X}_{G_{1} - \{v_{1}, v_{1, 1}, v_{1, 2}, \ldots, v_{1, k}\}}) \odot (\m_{1} + 1) \odot (\m_{2} + 1)^{\odot (k - j)}.$$
    
    For the case $j = 0$, we must ensure that some stable set covers $v_{1}$, so we subtract the case where this does not happen, meaning none of the sets $\{v_1\},$ $\{v_1,v_{1,j+1}\},\dots,\{v_1,v_{1,k}\}$ get used, i.e. we choose a 1 instead of an $\m_1$ or $\m_2$ from every term. Thus, the $j=0$ term instead becomes
    $$(\overline{X}_{H_{1}} \overline{X}_{G_{1} - \{v_{1}\}}) \odot \left((\m_{1} + 1) \odot (\m_{2} + 1)^{\odot k} - 1\right).$$

    We now sum both expressions over all subsets of $C_1=\{v_{1, 1}, v_{1, 2}, \ldots, v_{1, k}\}$. For a fixed value of $j$, we sum over all subsets of size $j$. Note that for fixed $j$, the only term that varies is $\overline{X}_{G_{1} - \{v_{1}, v_{1, 1}, v_{1, 2}, \ldots, v_{1, j}\}}$. Thus, for fixed $j \neq 0$, the sum becomes:
    $$(\m_{1} + 1) \odot (\m_{2} + 1)^{\odot (k - j)} \odot \m_{2^{j}} \odot \left(\overline{X}_{H_{1}} \left(\sum_{S\subseteq C_1,|S| = j} \overline{X}_{G_{1} - (\{v_{1}\} \cup S)}\right)\right),$$
    where $S$ denotes our subset. For $j = 0$, there is no summation:
    $$\left((\m_{1} + 1) \odot (\m_{2} + 1)^{\odot k} - 1\right) \odot (\overline{X}_{H_{1}} \overline{X}_{G_{1} - \{v_{1}\}}).$$

    By symmetry, the analysis for $G_2$ and $H_2$ yields identical expressions with the variables appropriately swapped. Since $\overline{X}_{H_1} = \overline{X}_{H_2}$ and $\overline{X}_{G_{1}} = \overline{X}_{G_2}$, we require only that the summations be equal:
    $$\sum_{|S| = j} \overline{X}_{G_{1} - (\{v_{1}\} \cup S)} = \sum_{|S| = j} \overline{X}_{G_{2} - (\{v_{2}\} \cup S)}.$$
    for all $j$. This is precisely the condition given in the proposition statement, and therefore $\overline{X}_{G_{1}'} = \overline{X}_{G_{2}'}$.
\end{proof}
The above sorts of methods of combining known graph pairs with equal KSF can likely be generalized to construct many similar infinite families of nonisomorphic graph pairs with equal KSF. However, another potential approach for finding graphs with equal KSF is to make use of the \emph{\tb{\tcb{contraction-deletion relation}}} from \cite{crew2023kromatic}, as it is a modified version of the contraction-deletion relation from \cite{crew2020deletion} for the vertex-weighted CSF, which was used in \cite{aliste2021vertex} to construct graph pairs with equal CSF. Since the contraction-deletion relation for the KSF is more complicated, it may be impossible to find nonisomorphic graphs that can be shown to have the same KSF with a single contraction-deletion step (like the authors of \cite{aliste2021vertex} did for the CSF). However, it is possible one could use some short sequence of contraction-deletion steps to show that certain graphs have the same KSF. This suggests the following question:

\begin{question}
    Can the contraction-deletion relation from \cite{crew2023kromatic} be used to explain why our graph pairs have equal KSF, or to construct more graph pairs with equal KSF?
\end{question}

\section{Orellana-Scott construction} \label{sec:orellana-scott}

In the remainder of this paper (this section and \S\ref{sec:split_graphs}), we look at two different constructions for graph pairs with equal CSF, and we try to check whether they can also have equal KSF. For both constructions, we have not found any examples where the graphs do have the same KSF, but we can instead prove that the graphs have different KSFs under certain assumptions despite having the same CSF, thus giving new examples of cases where the KSF is a more powerful invariant than the CSF.

Orellana and Scott \cite{orellana2014graphs} found the following construction that results in an infinite number of pairs of nonisomorphic graphs with the same CSF:
\begin{theorem}[Orellana-Scott~\cite{orellana2014graphs}, Theorem 4.2]\label{thm:orellana-scott}
    Let $G = (V,E)$ be a graph that has four vertices $u$, $v$, $w$, $z$ with the property that $uz, wz, vw \in E$ and $uw, vz, uw \notin E$. If there exists a graph automorphism $\varphi : G-wz \to G-wz$ such that
    \begin{align*}
    \varphi(\{u, w\}) = \{v, z\} \hspace{1cm}\text{ and } \hspace{1cm}\varphi(\{v, z\}) = \{u, w\}
    \end{align*}
    then
    \begin{align*}
    H := G\cup uw \hspace{1cm}\text{ and } \hspace{1cm}J := G\cup vz
    \end{align*}
    have the same chromatic symmetric function.
\end{theorem}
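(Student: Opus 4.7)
The plan is to reduce the problem to showing that two weighted graphs are isomorphic by applying the contraction-deletion recurrence for the weighted chromatic symmetric function (Crew-Spirkl \cite{crew2020deletion}, Lemma 1) to the newly added edges. Since $H - uw = G = J - vz$, this gives
\[
X_H = X_G - X_{H/uw}, \qquad X_J = X_G - X_{J/vz},
\]
so it suffices to prove $X_{H/uw} = X_{J/vz}$ as weighted CSFs. In both contractions the merged vertex has weight $2$ and every other vertex has weight $1$.

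I would then construct an explicit weighted-graph isomorphism $\psi \colon H/uw \to J/vz$. Let $m$ denote the merged vertex $\{u, w\}$ in $H/uw$ and $n$ the merged vertex $\{v, z\}$ in $J/vz$. Define $\psi(m) := n$ and $\psi(x) := \varphi(x)$ for every $x \in V(G) \setminus \{u, w\}$. Since $\varphi$ swaps $\{u, w\}$ with $\{v, z\}$, this restricts to a bijection $V(G) \setminus \{u, w\} \to V(G) \setminus \{v, z\}$, so $\psi$ is a vertex bijection $V(H/uw) \to V(J/vz)$, and it preserves weights because $m$ and $n$ both have weight $2$.

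The bulk of the proof is checking that $\psi$ preserves edges. The key observation I would use repeatedly is that any edge of $G$ (or non-edge) not equal to $\{w, z\}$ lies in $G - wz$ and is therefore preserved by $\varphi$. Splitting edges into types: (i) those inside $V(G) \setminus \{u,v,w,z\}$ and those between such a vertex and $\{v, z\}$ in $H/uw$ avoid $\{w,z\}$, so they are preserved by $\varphi$ and hence by $\psi$; (ii) the non-edge $\{v, z\}$ of $H/uw$ maps to $\{\psi(v), \psi(z)\} = \{u, w\}$, which is also a non-edge of $J/vz$ by hypothesis; (iii) for the edges incident to $m$, the triangle edges $vw, uz, wz$ of $G$ imply $m \sim v$ and $m \sim z$ in $H/uw$, which matches $n \sim u$ and $n \sim w$ in $J/vz$; and for $x \notin \{u,v,w,z\}$, we have $m \sim x$ iff $x \in N_G(u) \cup N_G(w)$, which by applying $\varphi$ (and using that the relevant adjacencies don't involve the edge $wz$) is equivalent to $\varphi(x) \in N_G(v) \cup N_G(z)$, i.e.\ $n \sim \psi(x)$. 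Combining all this shows $\psi$ is an isomorphism of weighted graphs, yielding $X_{H/uw} = X_{J/vz}$ and hence $X_H = X_J$.

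The main obstacle is the case analysis: one must separately consider $\varphi(u) = v$ (forcing $\varphi(w) = z$) and $\varphi(u) = z$ (forcing $\varphi(w) = v$) and, in each case, keep careful track of which endpoints of a given edge land in $\{u, v, w, z\}$, because only edges not involving $\{w, z\}$ are a priori preserved by $\varphi$. Once this bookkeeping is done, no further symmetric-function machinery is needed beyond the contraction-deletion identity.
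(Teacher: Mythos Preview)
The paper does not prove this theorem; it is quoted verbatim from Orellana--Scott \cite{orellana2014graphs} as background for \S\ref{sec:orellana-scott}, and the paper's own contribution in that section is Theorem~\ref{thm:orellana-scott-KSF}, which shows that under additional hypotheses the two graphs $H$ and $J$ are \emph{distinguished} by the KSF. So there is no ``paper's own proof'' to compare against.

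That said, your argument is correct and is a clean, modern route to the result. Using the Crew--Spirkl weighted deletion--contraction on the new edge reduces the claim to $X_{H/uw}=X_{J/vz}$, and your map $\psi$ (extend $\varphi$ by sending the merged vertex of weight $2$ to the other merged vertex) is indeed a weighted-graph isomorphism. The edge check goes through because $\varphi$ permutes $\{u,v,w,z\}$ setwise, so vertices outside $\{u,v,w,z\}$ stay outside, and every adjacency you need to transport is witnessed in $G-wz$; the only edge of $G$ not visible to $\varphi$ is $wz$ itself, and both of its endpoints get absorbed into the merged vertices $m$ and $n$, where the required adjacencies $m\sim v$, $m\sim z$ and $n\sim u$, $n\sim w$ follow directly from the hypotheses $vw,uz,wz\in E$. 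Your remark that the two cases $\varphi(u)=v$ versus $\varphi(u)=z$ must be handled separately is accurate but, as your own case~(iii) analysis shows, both cases close the same way.

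It is worth noting that your proof genuinely differs from the original: Orellana and Scott's 2014 argument predates the weighted chromatic symmetric function and its deletion--contraction relation (Crew--Spirkl, 2020), so they could not have argued this way. Your approach is in the spirit of \cite{aliste2021vertex}, which likewise exploits weighted deletion--contraction to produce equal-CSF pairs; what you gain is a short, conceptual proof that makes the role of the automorphism $\varphi$ transparent, at the cost of importing the weighted-CSF machinery.
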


\begin{example}
    One example of the construction in Theorem \ref{thm:orellana-scott} is shown below:
    \begin{center}
        \includegraphics[width=6cm]{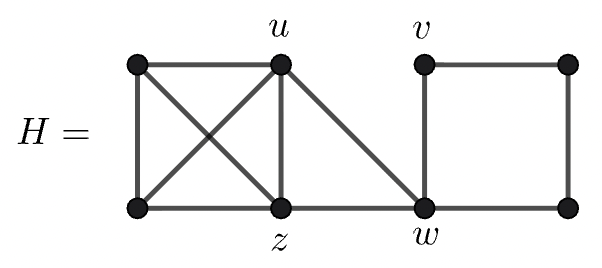}
        \hspace{1cm}
        \includegraphics[width=6cm]{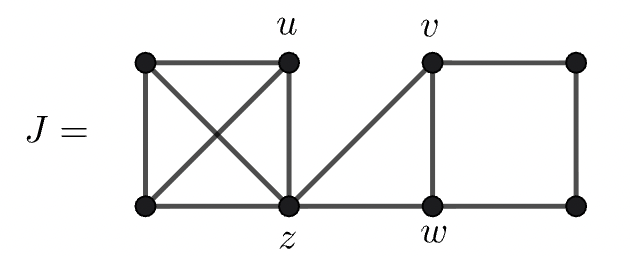}
    \end{center}
\end{example}

Orellana and Scott also observed that Theorem \ref{thm:orellana-scott} can be used to construct an infinite family of pairs of unicyclic graphs with the same CSF, by attaching a tree $T_1$ to each of $u$ and $z$, and a different tree $T_2$ to each of $v$ and $w$.

We show now that subject to certain conditions, the KSF does distinguish the pairs of graphs from Theorem \ref{thm:orellana-scott} even though the CSF does not:

\begin{theorem}\label{thm:orellana-scott-KSF}
    Assume $\varphi(u)=z,$ $\varphi(z)=u$, $\varphi(w) = v$, and $\varphi(v) = w$. Then $\ol{X}_H\ne\ol{X}_J$ as long as the sum of the number of neighbors of $w$ adjacent to neither $u$ nor $v$, plus the number of neighbors of $w$ adjacent to neither $z$ nor $v$, is different from the number of neighbors of $z$ adjacent to neither $u$ nor $v$, plus the number of neighbors of $z$ adjacent to neither $u$ nor $w$.
\end{theorem}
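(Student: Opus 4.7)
By Theorem~\ref{thm:ind_poly} it suffices to exhibit a single isomorphism type of induced subgraph whose count differs between $H$ and $J$. Since $H$ and $J$ differ only by the edge swap $uw \leftrightarrow vz$, for any vertex subset $S$ we have $H[S] = J[S]$ unless $\{u,w\} \subseteq S$ or $\{v,z\} \subseteq S$. The natural place to look for a discrepancy is among $5$-vertex induced subgraphs containing most of $\{u,v,w,z\}$, since this is where both the edge swap and the $w$--$z$ asymmetry in the hypothesis directly manifest.

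The plan is first to use the automorphism $\varphi$ of $G - wz$, which carries $\{u,w\}$ to $\{v,z\}$, in order to pair up ``singly asymmetric'' induced subgraphs: for $S$ containing $\{u,w\}$ but not both of $w,z$, one obtains a genuine isomorphism $H[S] \cong J[\varphi(S)]$, so these pairs contribute equal independence polynomials on the two sides. The pairing breaks down precisely for subsets $S$ with $\{w,z\} \subseteq S$, where $\varphi$ fails to preserve the edge $wz$; and symmetrically there is a contribution from $\{u,v\} \subseteq S$ when $uv \notin E(G)$. These exceptional subsets are the potential source of a net discrepancy between the two multisets of independence polynomials.

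The main computation will be a case analysis over the fifth vertex $x \in V \setminus \{u,v,w,z\}$: for each $5$-vertex subset $\{u,v,w,z,x\}$, classify $x$ by its adjacencies to the quadruple and record which isomorphism type $H[\{u,v,w,z,x\}]$ and $J[\{u,v,w,z,x\}]$ belong to. Counts of such $x$ naturally group into expressions like $|\{x : x \sim w,\ x \not\sim u,\ x \not\sim v\}|$, which are exactly the quantities $A$, $B$, $C$, $D$ from the hypothesis. I would then identify a particular $5$-vertex isomorphism type $F$ (the natural candidates being the bull graph or the cricket graph, i.e.\ a triangle with two pendants from different or identical triangle vertices) for which the difference between the induced counts of $F$ in $H$ and in $J$ reduces to a nonzero multiple of $A + B - C - D$, so that the assumed strict inequality $A + B \neq C + D$ forces $\overline{X}_H \neq \overline{X}_J$ via Theorem~\ref{thm:ind_poly}. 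The main obstacle is selecting the correct $F$ and verifying that all other $5$-vertex shapes match between $H$ and $J$ via the $\varphi$-pairing, while carefully handling the possible edge $uv$ and the delicate boundary cases where $\varphi$ interacts with the edge $wz$.
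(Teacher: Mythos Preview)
Your strategy is in the right spirit, but the paper executes it with a much simpler target graph: instead of a $5$-vertex graph like the bull or the cricket, it counts induced copies of the $4$-vertex claw $K_{1,3}$, which is one of the independence-unique graphs listed in Theorem~\ref{thm:graph_list}. With the claw the whole argument is short: any induced claw not containing both of $u,w$ or both of $v,z$ appears identically in $H$ and in $J$; among the remaining claws, two of the four possible configurations are matched bijectively by applying $\varphi$ to all four vertices, and the other two configurations (claws centered at $w$ versus claws centered at $z$, with the remaining leaves forced) are counted precisely by the four quantities in the hypothesis. No $5$-vertex case analysis is needed, and the numbers in the statement fall out immediately.

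A few specific issues with your write-up. First, the opening sentence is not quite right: Theorem~\ref{thm:ind_poly} lets you conclude $\overline{X}_H \neq \overline{X}_J$ from a discrepancy in the count of induced subgraphs with a given \emph{independence polynomial}, not a given isomorphism type; you need your target $F$ to be independence-unique (as the claw is) before ``count copies of $F$'' suffices on its own. Second, your $\varphi$-pairing is more delicate than stated: for $S \supseteq \{u,w\}$ with $z \notin S$, the map $H[S] \to J[\varphi(S)]$ is an isomorphism only if additionally $v \notin S$, since $\varphi(\{u,v\}) = \{z,w\}$ and hence $\{w,z\}\subseteq\varphi(S)$, reintroducing the deleted edge $wz$ on the $J$-side. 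So the exceptional $5$-vertex subsets are not only those of the form $\{u,v,w,z,x\}$ but all those containing at least three of $u,v,w,z$; your ``fifth vertex $x$'' case analysis would not cover them. Third, you never verify that the bull or cricket is independence-unique, and you explicitly leave the choice of $F$ and the final computation open; as written this is a plan rather than a proof. The paper's choice of the claw sidesteps all three issues at once.
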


Note in particular that in the unicyclic case where we attach a tree $T_1$ to each of $u$ and $z$ and a different tree $T_2$ to each of $v$ and $w$, the KSFs are different as long as the number of edges attaching $T_2$ to $w$ is different from the number of edges attaching $T_1$ to $z$.

\begin{proof}
    To prove this, we count the number of induced subgraphs in $H$ and $J$ that are isomorphic to the claw graph, shown below: 
    \begin{center}   
    \includegraphics[scale=0.2]{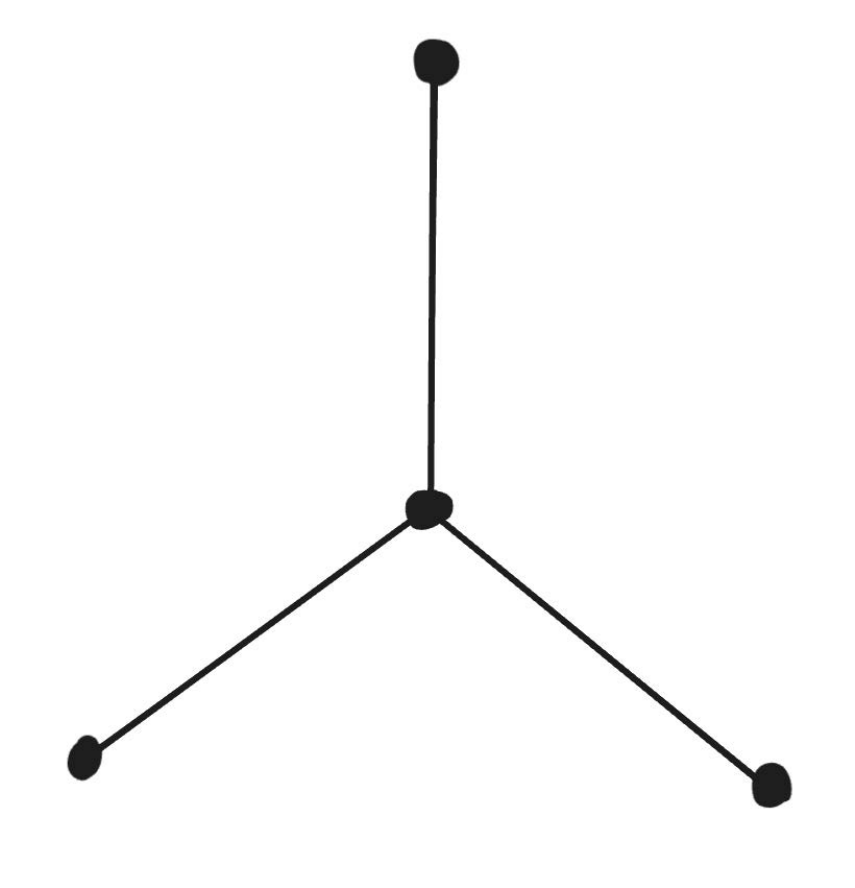}
\end{center}
    \noindent Since the claw is one of the independence unique graphs from Theorem \ref{thm:graph_list}, if we can show that the number of induced claws is different in $H$ than in $J$, it will follow that $\ol{X}_H\ne\ol{X}_J$.
    
    Note that if an induced claw does not contain either both $u$ and $w$ or both $v$ and $z$, then its set of 4 vertices forms an induced claw in both $H$ and $J$, since the exact same set of edges connect those vertices in $H$ as in $J$. So, we can ignore those claws since the number of them is the same in $H$ and $J$. 
    
    Thus, consider induced claws containing both $u$ and $w$ or both $v$ and $z$. We have the following four ways to get induced claws in $H$ containing both $u$ and $w$ or both $v$ and $z$:
    \begin{center}
    \includegraphics[width=13cm]{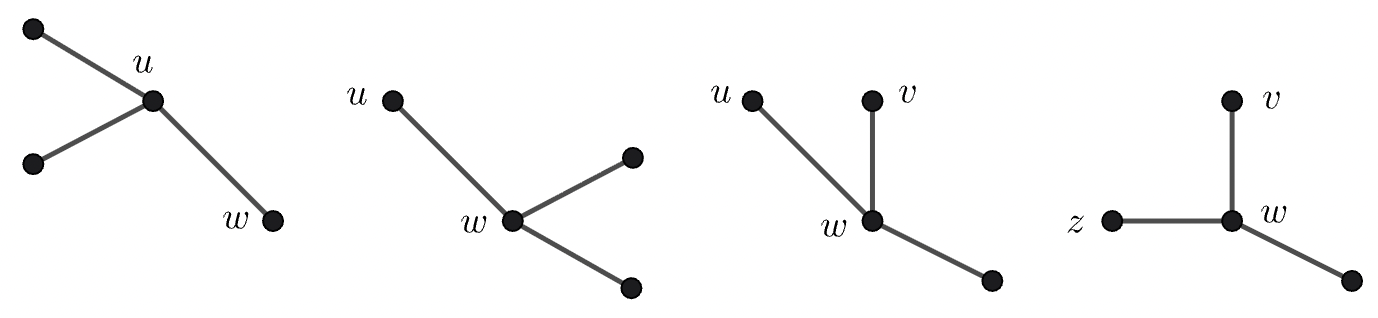}
    \end{center}
    Similarly, we have the following four cases for induced claws in $J$ containing both $u$ and $w$ or both $v$ and $z$:
    \begin{center}
    \includegraphics[width=13cm]{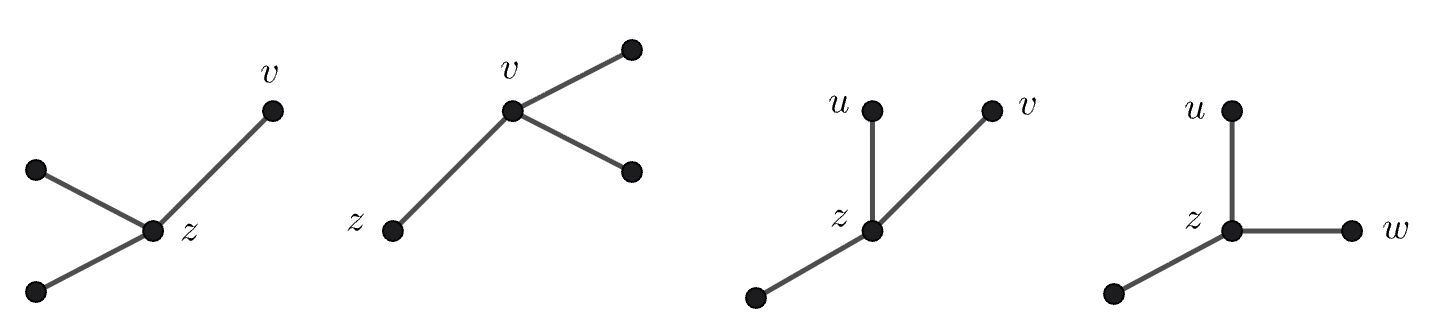}
    \end{center}
    For each claw from the first case for either $H$ or $J$, we can get a bijection from induced claws in $H$ to induced claws in $J$ by applying $\varphi$ to all the vertices, as shown below:
    \begin{center}
        \includegraphics[width=10cm]{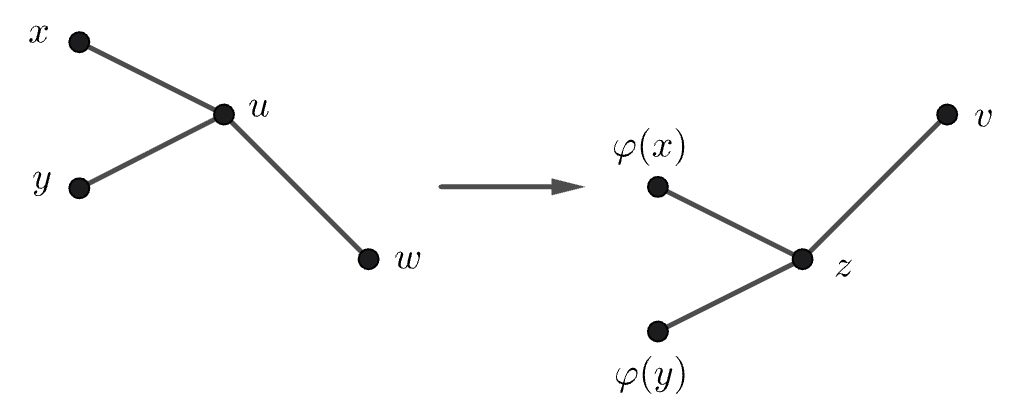}
    \end{center}
    The same thing holds for the second case above:
    \begin{center}
        \includegraphics[width=10.5cm]{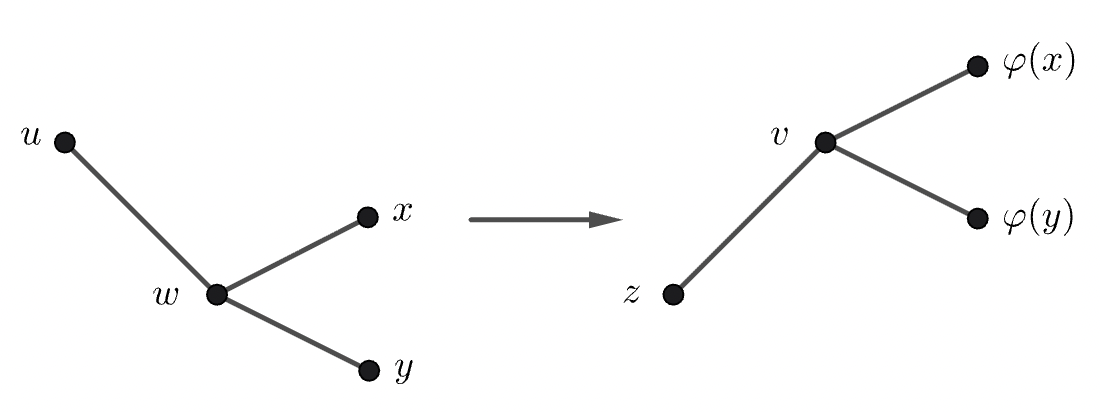}
    \end{center}
    Now in the third case, the number of induced claws in $H$ is equal to the number of neighbors of $w$ adjacent to neither $u$ or $v$, while in $J$ it is equal to the number of neighbors of $z$ adjacent to neither $u$ nor $v.$ Similarly, in the fourth case, the number of induced claws in $H$ is equal to the number of neighbors of $w$ adjacent to neither $z$ nor $v,$ while in $J$, it is equal to the number of neighbors of $z$ adjacent to neither $u$ nor $w$. It follows from the assumption in Theorem \ref{thm:orellana-scott-KSF} that the total number of induced claws is different in $H$ than in $J$, hence $\ol{X}_H\ne \ol{X}_J.$
    \end{proof}

\section{Aliste-Prieto--Crew--Spirkl--Zamora construction}\label{sec:split_graphs}

Aliste-Prieto, Crew, Spirkl, and Zamora \cite{aliste2021vertex} gave a different construction of infinitely many pairs of nonisomorphic graphs with the same CSF. To describe their construction, we first define the split graph $\tn{sp}(G)$. There is a natural way noted by \cite{loebl2019isomorphism} to associate to any possibly non-simple (unweighted) graph a corresponding simple split graph. Let $G$ be a graph with vertex set $V(G) =\{v_{1},...,v_{n}\}$ and edge set $E(G) =\{e_{1},...,e_{m}\}$.  Then the \emph{\tb{\tcb{split graph}}} $\tn{sp}(G)$ corresponding to $G$ has vertex set $$V(\tn{sp}(G)) := V(G)\cup E(G)$$ and edge set $$E(\tn{sp}(G)) :=\{v_{i}v_{j}: 1 \leq i < j \leq n\} \cup \{v_{i}e_{k}, \ v_j e_k:e_{k}=v_{i}v_{j} \hspace{0.1cm} \text{in} \hspace{0.1cm} G\}.$$  In other words, $\tn{sp}(G)$ is formed by taking the vertices of $G$, making them into a clique, and then adding a “hat” corresponding to each edge of $G$.  Using the above notation, we say that vertex $e_k$ of $\tn{sp}(G)$ is the splitting vertex of the edge $e_k=v_{i}v_{j}$ in $G$. An example of the split graph construction is shown below:
\begin{center}
    \includegraphics[width=10cm]{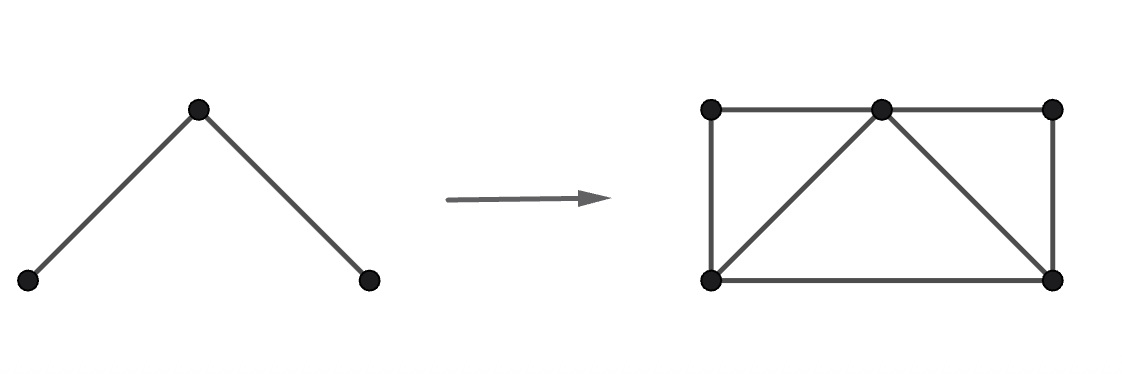}
\end{center}

\begin{theorem}[Aliste-Prieto-Crew-Spirkl-Zamora \cite{aliste2021vertex}, Lemma 8]\label{thm:ALI21}
    Let $G$ be  an  unweighted  graph.   Suppose $G$ has (not  necessarily  distinct) vertices $u,u',v,v'$ such that:
    \begin{itemize}
        \item $uv \not \in E(G)$ \hspace{0.1cm}\text{and}\hspace{0.1cm} $u'v' \not \in E(G)$
        \item There is some automorphism of G that maps $u$ to $u'$, and some (possibly different) automorphism of G that maps $v$ to $v'$.
    \end{itemize}
    Then $X_{\tn{sp}(G \cup uv)} = X_{\tn{sp}(G \cup u'v')}$.
\end{theorem}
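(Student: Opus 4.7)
The plan is to derive an explicit formula for $X_{\tn{sp}(G\cup uv)}$ by summing over proper colorings in stages, and then use the two given automorphisms of $G$ to match up corresponding terms in the formulas for $\tn{sp}(G\cup uv)$ and $\tn{sp}(G\cup u'v')$. First, since $V(G)$ forms a clique in every split graph under consideration, any proper coloring restricted to $V(G)$ is an injection $\kappa:V(G)\hookrightarrow \mathbb{P}$. Each splitting vertex $e_k = v_iv_j$ has only the two neighbors $v_i, v_j$, and splitting vertices are mutually non-adjacent, so summing independently over the color of each splitting vertex contributes a factor $p_1 - x_{\kappa(v_i)} - x_{\kappa(v_j)}$, where $p_1 := \sum_c x_c$. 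Setting
\[
A(\kappa) := \prod_{w\in V(G)} x_{\kappa(w)} \cdot \prod_{e_k = v_iv_j \in E(G)} \bigl(p_1 - x_{\kappa(v_i)} - x_{\kappa(v_j)}\bigr),
\]
this yields
\[
X_{\tn{sp}(G\cup uv)} = \sum_{\kappa : V(G) \hookrightarrow \mathbb{P}} A(\kappa)\,\bigl(p_1 - x_{\kappa(u)} - x_{\kappa(v)}\bigr),
\]
with an analogous identity for $X_{\tn{sp}(G\cup u'v')}$ obtained by replacing $u,v$ with $u',v'$ in the last factor only.

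Subtracting the two expressions reduces the theorem to the two identities
\[
\sum_{\kappa} A(\kappa)\,x_{\kappa(u)} = \sum_{\kappa} A(\kappa)\,x_{\kappa(u')}, \qquad \sum_{\kappa} A(\kappa)\,x_{\kappa(v)} = \sum_{\kappa} A(\kappa)\,x_{\kappa(v')}.
\]
To prove the first, let $\psi$ be the inverse of the hypothesized automorphism of $G$ sending $u$ to $u'$, so that $\psi$ is an automorphism of $G$ with $\psi(u')=u$. The substitution $\kappa \mapsto \kappa\circ\psi$ is a bijection on injective colorings of $V(G)$; since $\psi$ permutes $V(G)$ and, as a graph automorphism, permutes $E(G)$, we get $A(\kappa\circ\psi)=A(\kappa)$. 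Moreover $x_{(\kappa\circ\psi)(u')} = x_{\kappa(u)}$. Reindexing the right-hand sum via this bijection then yields the first identity, and the second follows symmetrically using an automorphism of $G$ sending $v'$ to $v$.

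The only step requiring care will be the invariance $A(\kappa\circ\psi) = A(\kappa)$, which ultimately rests on the fact that a graph automorphism of $G$ induces a bijection on $E(G)$, so that the product over $E(G)$ in $A$ simply rearranges into itself after substitution. I do not expect any step to be a substantial obstacle; in particular, the hypothesis $uv, u'v' \notin E(G)$ is used only to ensure that $\tn{sp}(G\cup uv)$ and $\tn{sp}(G\cup u'v')$ each contain a new splitting vertex that is not already present in $\tn{sp}(G)$, so that the final factor in the formula above correctly accounts for it.
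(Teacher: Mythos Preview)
The paper does not prove this statement; it is quoted from \cite{aliste2021vertex} as Lemma 8 and used as a black box. Your argument is correct. The factorization of $X_{\tn{sp}(G\cup uv)}$ is valid because the splitting vertices form an independent set and each has exactly the two neighbors $v_i,v_j$ in the clique on $V(G)$, so once an injective coloring $\kappa$ of $V(G)$ is fixed, summing over the color of each $e_k$ contributes the factor $p_1 - x_{\kappa(v_i)} - x_{\kappa(v_j)}$ independently. The substitution $\kappa\mapsto\kappa\circ\psi$ is a bijection on injections $V(G)\hookrightarrow\mathbb{P}$, and $A(\kappa\circ\psi)=A(\kappa)$ holds because $\psi$ permutes $V(G)$ (rearranging the vertex product) and, as a graph automorphism, permutes $E(G)$ (rearranging the edge product). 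The two displayed identities then follow, and their sum gives the vanishing of the difference.

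For comparison, the paper notes near the end of \S\ref{section:counterexamples} that the construction in \cite{aliste2021vertex} is proved there via the deletion--contraction relation for the vertex-weighted CSF from \cite{crew2020deletion}. That approach applies a single deletion--contraction step at the new splitting vertex and uses the weighted-CSF framework to match the contracted graphs. Your proof is more elementary---it uses nothing beyond the definition of $X_G$---and exploits the split-graph structure directly (clique plus degree-$2$ hats), at the cost of being tailored to that structure rather than fitting into a general recursive framework.
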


\bigskip

Like for the Orellana-Scott construction, we prove that subject to certain conditions, the graph pairs from the construction in \cite{aliste2021vertex} have different KSF even though they have the same CSF:

\begin{theorem}\label{thm:split_graphs}
    Suppose $u$ and $v$ have at least one common neighbor in $G$ but $u'$ and $v'$ have no common neighbors, and suppose also that $|V|\ge 6$, and that at least one of the following conditions holds:
    \begin{enumerate}
        \item One of the common neighbors of $u$ and $v$ in $G$ has degree at least 3.
        \item $\deg_G(u)\ge 2$.
        \item $\deg_G(v)\ge 2.$
    \end{enumerate}
    Then $\ol{X}_{\tn{sp}(G\cup uv)}\ne \ol{X}_{\tn{sp}(G\cup u'v')}.$
\end{theorem}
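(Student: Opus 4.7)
The plan is to apply Theorem~\ref{thm:ind_poly}: to show the two KSFs differ, it suffices to find an independence polynomial $p$ such that $\tn{sp}(G\cup uv)$ and $\tn{sp}(G\cup u'v')$ contain different numbers of induced subgraphs with independence polynomial $p$. The two split graphs share the common subgraph $\tn{sp}(G)$ and differ only in a single extra degree-$2$ edge vertex $E$ attached to $\{u,v\}$ versus $\{u',v'\}$, so any discrepancy in induced subgraph counts must come from induced subgraphs containing $E$. The natural witness exploits the common-neighbor structure: if $c$ is a common neighbor of $u$ and $v$ in $G$, the six vertices $\{u,v,c,e_{uv},e_{uc},e_{vc}\}$ induce in $\tn{sp}(G\cup uv)$ a $6$-vertex graph $H$ consisting of a triangle $\{u,v,c\}$ together with three edge-vertex ``hats'' (one for each side of the triangle). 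No analogous $6$-vertex configuration built from the new edge arises in $\tn{sp}(G\cup u'v')$, because $u',v'$ share no neighbor.

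Concretely, I would count induced $6$-vertex subgraphs of $\tn{sp}(G')$ with the independence polynomial of $H$, namely $(1+x)(1+5x+x^2) = 1+6x+6x^2+x^3$. Using that clique vertices of $\tn{sp}(G)$ have degree at least $|V|-1\ge 5$ while edge vertices have degree $2$, a short case analysis on the number $k$ of clique vertices in a $6$-vertex induced subgraph with $9$ edges shows $k\in\{3,4\}$. The case $k=3$ forces the subgraph to be $H$ itself, with its three clique vertices forming a triangle of $G'$ and the three edge vertices being the splitting vertices of that triangle, so the $k=3$ contribution equals the number of triangles of $G'$. The case $k=4$ splits into two subcases depending on whether the ``outside-attached'' edge vertex is incident to an endpoint of the ``inside'' edge vertex, and only the ``non-incident'' subcase yields the target polynomial (the ``incident'' subcase gives $1+6x+6x^2+2x^3$). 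A direct enumeration shows that this non-incident $k=4$ contribution equals $4(|V|-4)$ times the number of unordered pairs of disjoint edges of $G'$, a quantity determined entirely by $|V|$, $|E(G')|$, and $\sum_v\deg_{G'}(v)^2$.

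By the automorphism hypothesis of Theorem~\ref{thm:ALI21} we have $\deg_G(u)=\deg_G(u')$ and $\deg_G(v)=\deg_G(v')$, so $G\cup uv$ and $G\cup u'v'$ share identical degree sequences and the $k=4$ contribution is the same for both. Meanwhile the $k=3$ contribution differs by exactly the number of common neighbors of the new edge's endpoints in $G$, which is at least $1$ for $G\cup uv$ and $0$ for $G\cup u'v'$. Hence the total count of induced subgraphs of $\tn{sp}(G')$ with independence polynomial $1+6x+6x^2+x^3$ differs between the two graphs, so Theorem~\ref{thm:ind_poly} gives $\ol{X}_{\tn{sp}(G\cup uv)}\ne\ol{X}_{\tn{sp}(G\cup u'v')}$. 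The main obstacle is verifying completeness of the $k$-case analysis and correctness of the two $k=4$ independence polynomial computations, i.e., ruling out any further $6$-vertex configuration that produces the target polynomial. The hypotheses (1)--(3) appear to be useful for handling variant witness subgraphs (for instance a $5$-vertex witness like $\{u,v,c,e_{uv},e_{ud}\}$ under condition (2), where $u$ has an extra neighbor $d$) that may be needed when the $k=4$ case is degenerate; the argument above as stated seems to require only $|V|\ge 6$ and the common-neighbor gap.
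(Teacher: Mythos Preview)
Your approach is correct and genuinely different from the paper's proof. Both arguments invoke Theorem~\ref{thm:ind_poly}, but with different witness polynomials: the paper uses a pair of $7$-vertex graphs $H_1,H_2$ sharing the independence polynomial $2x^5+9x^4+16x^3+15x^2+7x+1$, and goes through a fairly elaborate case analysis (Subcases 1.1--1.3 and 2.1--2.2) to show that the $H_1$-count matches while the $H_2$-count differs. Your choice of the $6$-vertex polynomial $1+6x+6x^2+x^3$ is much cleaner: once one verifies that the only induced $6$-subsets of a split graph achieving this polynomial are the ``triangle with three hats'' ($k=3$) and the ``$K_4$ with a hat and a non-incident pendant'' ($k=4$, Case~A), the comparison collapses to (i)~the triangle count of $G\cup uv$ versus $G\cup u'v'$, which differs by the number of common neighbors of $u,v$, and (ii)~a degree-sequence invariant, which matches by the automorphism hypothesis. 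Your edge-counting argument for ruling out $k\notin\{3,4\}$ is correct for simple $G$ (the $\binom{k}{2}$ clique edges alone already exceed $9$ for $k\ge 5$, and for $k\le 2$ one cannot reach $9$ without multi-edges), and your Case~A/B independence polynomial computations ($1+6x+6x^2+x^3$ versus $1+6x+6x^2+2x^3$) are right.

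What your approach buys is more than tidiness: you never actually need hypotheses (1)--(3). In the paper's proof those conditions are exactly what makes the extra $H_2$-contribution in Subcases~2.1 and~2.2 strictly positive (they produce terms like $\sum_{w\sim u,v}(\deg_G(w)-2)$ and $\sum_{x\sim u,v}(\deg_G(u)-1)$, which vanish when all three conditions fail). Your argument, by contrast, distinguishes the two split graphs as soon as $u,v$ have a common neighbor and $u',v'$ do not (and $G$ is simple), regardless of degrees. So your final paragraph speculating that (1)--(3) might be needed ``when the $k=4$ case is degenerate'' is off the mark: the $k=4$ contribution is always equal for the two graphs, and the $k=3$ contribution always differs by at least one under the common-neighbor hypothesis. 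You have in fact proved a strictly stronger statement than Theorem~\ref{thm:split_graphs}. The one assumption worth making explicit is that $G$ is simple; with multi-edges the $k\le 3$ classification sprouts extra configurations (e.g.\ two hats on one side of a triangle) that you would need to check separately, though none of them produce the target polynomial.
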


\begin{proof}
    It can be checked that the two graphs $H_1$ and $H_2$ shown below have the same independence polynomial $2x^5 + 9x^4 + 16x^3 + 15x^2 + 7x + 1,$ and that no other graphs have that independence polynomial:
    $$\includegraphics[width=7cm]{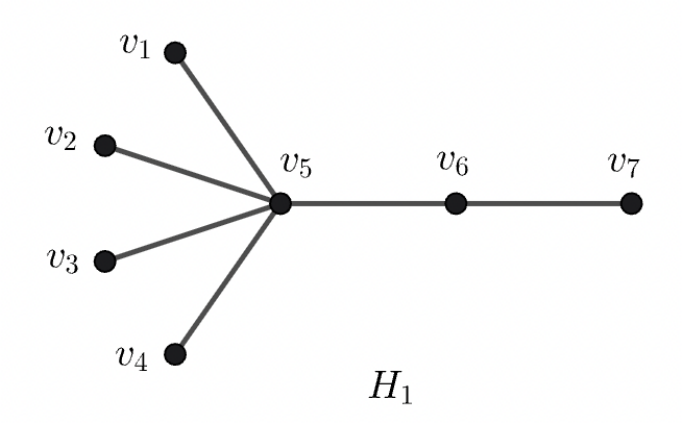}\hspace{1cm}\includegraphics[width=6.5cm]{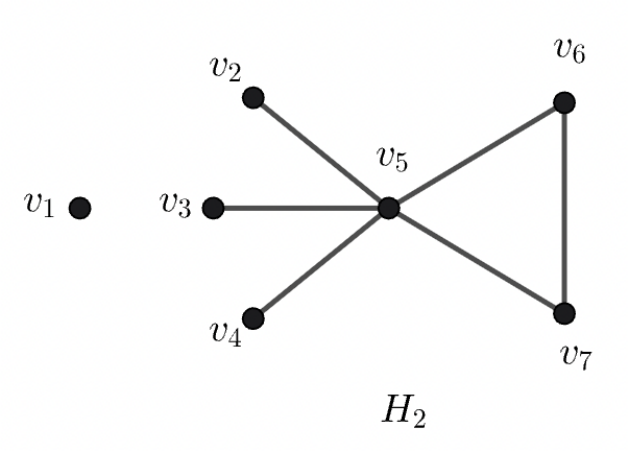}$$
    Thus, it follows from Theorem \ref{thm:ind_poly} that the sum of the number of induced copies of $H_1$ in $\tn{sp}(G\cup uv)$ plus the number of induced copies of $H_2$ in $\tn{sp}(G\cup uv)$ can be determined from $\ol{X}_{\tn{sp}(G\cup uv)},$ and likewise for $\tn{sp}(G\cup u'v')$. In particular, if we can show that this sum is different for $G\cup uv$ than for $G\cup u'v'$, it will follow that $\ol{X}_{\tn{sp}(G\cup uv)}\ne \ol{X}_{\tn{sp}(G\cup u'v')}.$ We will show in particular that this sum is greater in $\tn{sp}(G\cup uv)$ than in $\tn{sp}(G\cup u'v')$.

    \begin{lemma}\label{lem:H_1}
        The number of induced copies of $H_1$ is the same in $\tn{sp}(G\cup uv)$ and $\tn{sp}(G\cup u'v')$.
    \end{lemma}

    \begin{proof}
        In an induced copy of $H_1$ in $\tn{sp}(G\cup uv)$ or $\tn{sp}(G\cup u'v')$, each vertex must come from either a vertex or an edge of the original graph $G\cup uv$ or $G\cup u'v'$. Each edge vertex has degree 2, and $v_5$ of $H_1$ has degree 5, so $v_5$ must come from a vertex of $H_1$. Then if $v_6$ came from an edge of $H_1$, $v_7$ would have to come from a vertex since no two edge vertices are connected to each other. But then $v_5$ and $v_7$ would need to be connected, since they would both come from vertices in the original graph. This is a contradiction, so $v_6$ must come from a vertex of the original graph. Then $v_1,v_2,v_3,$ and $v_4$ must come from edges, since they are not connected to $v_6$, and $v_7$ must also come from an edge, since it is not connected to $v_5$. Thus, the only possibility is that the vertices of $H_1$ labeled ``vertex" below come from vertices of $G\cup uv$ or $G\cup u'v'$ while the ones labeled ``edge" come from edges:
        \begin{center}              \includegraphics[width=0.5\linewidth]{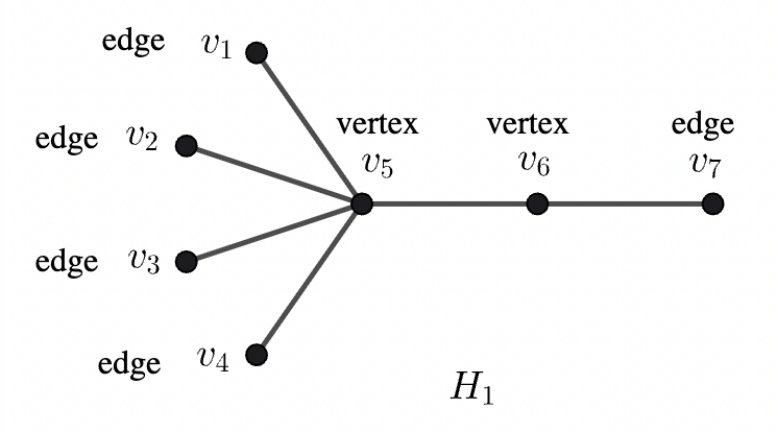}
        \end{center}
        Next, note that if none of the edge vertices are $uv$ or $u'v'$, the induced copy of $H_1$ exists in both split graphs. Thus, it suffices to show that the number of induced copies of $H_1$ in $\tn{sp}(G\cup uv)$ with $uv$ as a vertex equals the number of induced copies of $H_1$ in $\tn{sp}(G\cup u'v')$ with $u'v'$ as a vertex. There are two cases for where this vertex can be:\\
        \\
             \textbf{Case 1.} $uv$ or $u'v'$ is one of the four edge vertices on the left.\\
            \\
            \indent We will give a 1-to-1 correspondence between induced copies of $H_1$ in $\tn{sp}(G\cup uv)$ where $uv$ is one of the left vertices and induced copies of $H_1$ in $\tn{sp}(G\cup u'v')$ where $u'v'$ is one of the left vertices. If we assume $v_1=uv$ in a copy of $H_1$ in $\tn{sp}(G\cup uv)$, then $v_5$ must be either $u$ or $v$. The two cases are identical, so assume $v_5=u$. Then $v_2, v_3,$ and $v_4$ come from three edges $uw,ux,$ and $uy$ of $G$ that are incident to $u$, $v_6$ comes from some other vertex $z\ne u,v,w,x,y$, and $v_7$ comes from an edge $zt$ incident to $z$.
            
            Let $\varphi_u$ be the automorphism of $G$ taking $u$ to $u'$. Then we can generally turn an induced copy of $H_1$ in $\tn{sp}(G\cup uv)$ with $v_1=uv$ and $v_5=u$ into one in $\tn{sp}(G\cup u'v')$ with $v_1=u'v'$ and $v_5=u'$ by simply applying $\varphi_u$ to all the vertices and edges, except that we let $uv$ map to $u'v'$ instead of to $u'\varphi_u(v)$, as shown below: 
            $$\vcenter{\includegraphics[width=6.5cm]{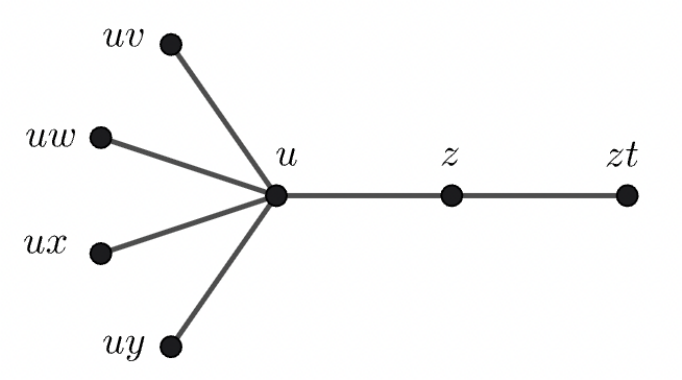} \hspace{1cm}\includegraphics[width=8cm]{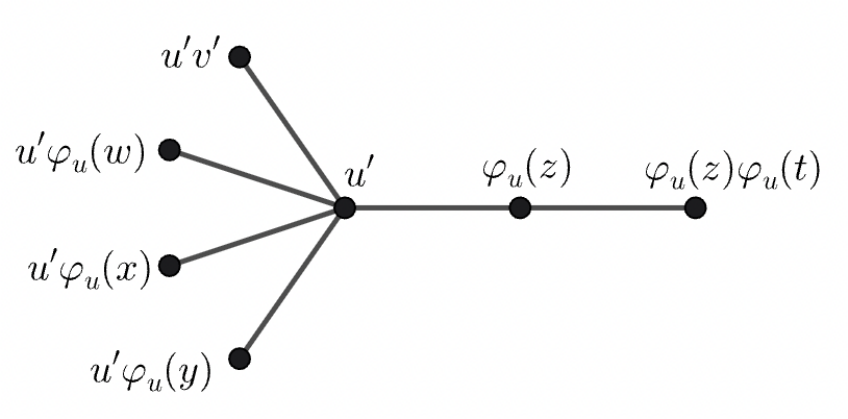}}$$
            The exception where this does not work is if $\varphi_u(z) = v'$, since then vertices $\varphi_u(z)$ and $u'v'$ on the right would be connected. We can assume $\varphi_u(v)\ne v'$, since in that case $G\cup uv$ and $G\cup u'v'$ would be isomorphic via $\varphi$. Then we can instead map $z=\varphi_u^{-1}(v')$ to $\varphi_u(v)$, and the edge $\varphi_u^{-1}(v')s$ incident to $\varphi_u^{-1}(v')$ just needs to map to some edge $\varphi_u(v)t$ incident to $\varphi_u(v)$, as shown below:
            $$\vcenter{\includegraphics[width=7cm]{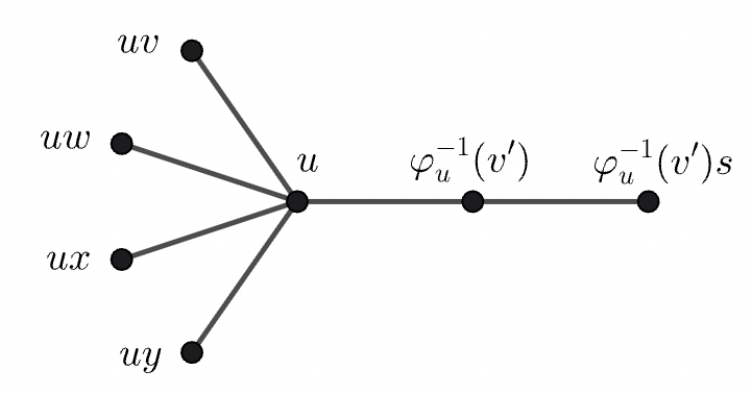} \hspace{1cm}\includegraphics[width=8cm]{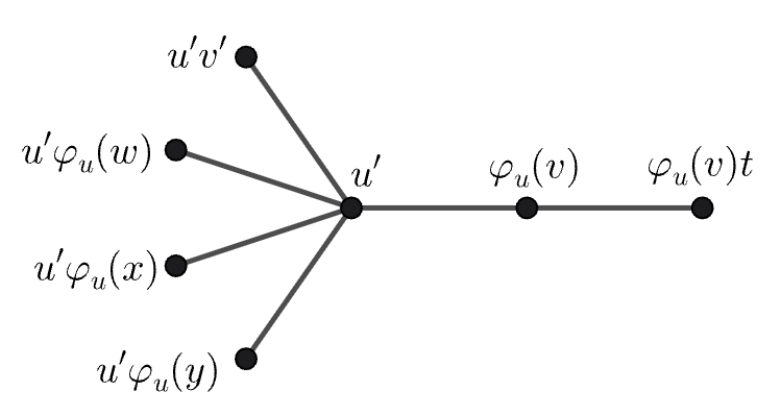}}$$
            The number of such graphs on the left is the same as the number on the right because there is an automorphism taking $v$ to $v'$, hence $v$ and $v'$ have the same degree, and thus so do $\varphi_u(v)$ and $\varphi_u^{-1}(v')$.
            
            The only potential issue is that on the left, the edge incident to $\varphi_u^{-1}(v')$ must not be the edge $u\varphi_u^{-1}(v')$, and similarly, on the right, the edge incident to $\varphi_u(v)$ must not be $u'\varphi_u(v)$. But both those requirements hold automatically, because $uv$ is not an edge in $G$, so applying the automorphism $\varphi_u$ to its endpoints shows that $\varphi_u(u)\varphi_u(v) = u'\varphi_u(v)$ is also not an edge. Similarly, $u'v'$ is not an edge, so applying the automorphism $\varphi_u^{-1}$ to both its endpoints shows that $\varphi^{-1}(u')\varphi^{-1}(v') = u\varphi^{-1}(v)$ is also not an edge.
            
            An identical argument shows that we still get the same number of induced copies of $H_1$ in both graphs if we let $v_5=v$ on the left and $v_5=v'$ on the right. \\
            \\
            \textbf{Case 2.} $uv$ or $u'v'$ is the rightmost edge vertex $v_7$.\\
            \\
            \indent The argument in this case is essentially the same as in Case 1. If $v_7=uv$ in an induced copy of $H_1$ in $\tn{sp}(G\cup uv)$, then $v_6\in\{u,v\}$, so without loss of generality assume $v_6 = u$. Then we can generally get a corresponding induced copy of $H_1$ in $\tn{sp}(G\cup u'v')$ with $v_6=u'$ and $v_7=u'v'$ by simply applying $\varphi_u$ to all vertices and edges, except that we send $uv$ to $u'v'$ as before:
            $$\includegraphics[width=6.5cm]{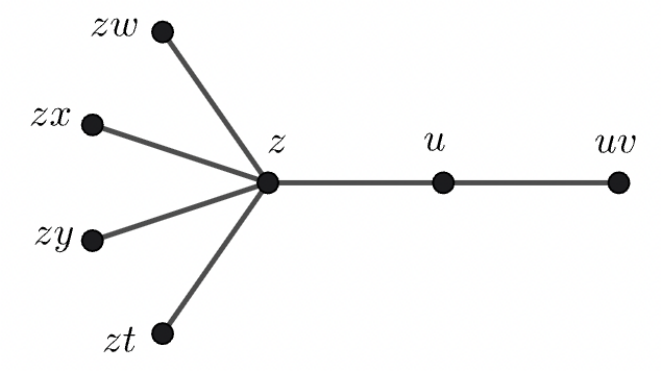}\hspace{1cm}\includegraphics[width=8cm]{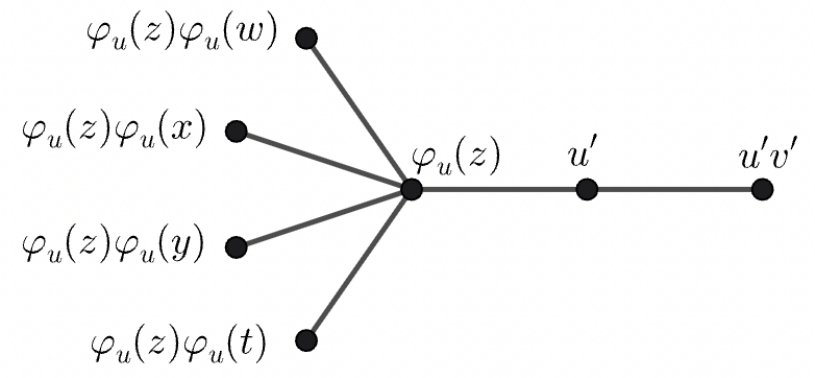}$$
            The exception is when $v_5 = \varphi^{-1}_u(v')$ in $\tn{sp}(G\cup uv)$, in which case we set $v_5 = \varphi_u(v)$ in the corresponding copy of $H_1$ in $\tn{sp}(G\cup u'v')$:
            $$\includegraphics[width=6.5cm]{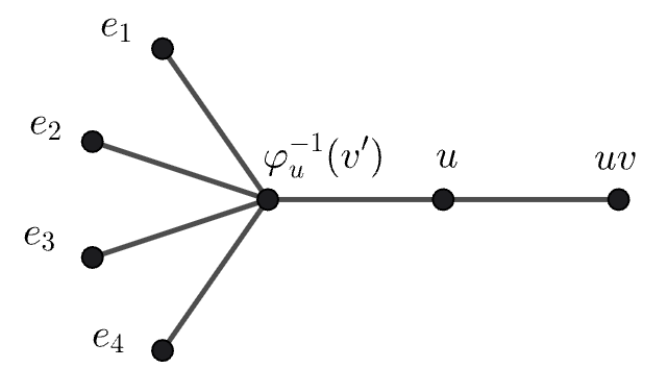}\hspace{1cm}\includegraphics[width=6.5cm]{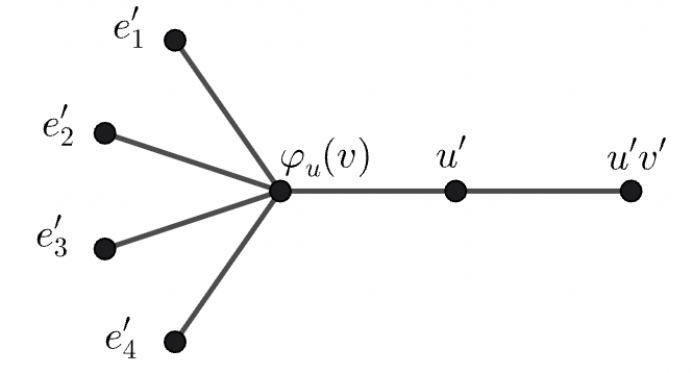}$$
            Then the number of choices for the left vertices $v_1,v_2,v_3,$ and $v_4$ is the same in both cases because $\varphi_u^{-1}(v')$ and $\varphi^{-1}_u(v')$ have the same degree as noted above, so the number of ways to choose four edges $e_1,e_2,e_3,$ and $e_4$ incident to $\varphi_u^{-1}(v')$ is the same as the number of ways to choose four edges $e_1',e_2',e_3',$ and $e_4'$ incident to $\varphi_u(v).$ As in Case 1, we do not have to worry about any of these edges having $u$ or $u'$ as an endpoint, since $u'\varphi_u(v)$ and $u\varphi^{-1}(v')$ are not edges in $G$.

            As in Case 1, the argument is identical if we instead let $v_6 = v$ in $\tn{sp}(G\cup uv)$ and $v_6=v'$ in $\tn{sp}(G\cup u'v')$.
    \end{proof}

    \begin{lemma}\label{lem:H_2}
        The number of induced copies of $H_2$ is greater in $\tn{sp}(G\cup uv)$ than in $\tn{sp}(G\cup u'v')$.
    \end{lemma}

    \begin{proof}
    Again, we first consider which vertices of an induced copy of $H_2$ could come from vertices of $G\cup uv$ or $G\cup u'v'$ and which ones could come from edges. Since $v_5$ has degree 5, it must come from a vertex. Since $v_6$ and $v_7$ are connected to each other, they cannot both come from edges, but they could come from either an edge and a vertex or both vertices.  Then $v_1,v_2,v_3,$ and $v_4$ must all come from edges since they are not connected to $v_6$ or $v_7$, at least one of which comes from a vertex. We thus have two cases: $v_6$ and $v_7$ come from two vertices or an edge and a vertex.\\
    \\
    \textbf{Case 1.} $v_6$ comes from an edge and $v_7$ from a vertex.
    \begin{center}    \includegraphics[width=7cm]{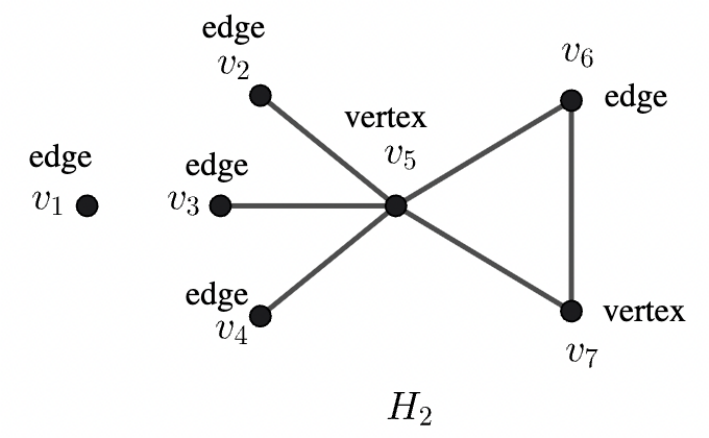}
    \end{center}
    \indent We will show that in this case, the number of induced copies of $H_2$ in the two graphs is the same. As in Case 1, for each induced copy of $H_2$ in $\tn{sp}(G\cup uv)$ where $uv$ is not a vertex, there is a corresponding copy in $\tn{sp}(G\cup u'v')$ where $u'v'$ is not a vertex (by just using all the same edges and vertices), so it suffices to show that there are the same number of copies of $H_2$ in $\tn{sp}(G\cup uv)$ using $uv$ as a vertex as there are copies in $\tn{sp}(G\cup u'v')$ using $u'v'$ as a vertex. We now consider subcases based on the location of $uv$ or $u'v'$.\\
    \\
    \textbf{Subcase 1.1.} $uv$ or $u'v'$ is the lone vertex $v_1$ on the left.\\
    \\
    \indent Then the induced copy of $H_2$ in $\tn{sp}(G\cup uv)$ looks something like this, where $w$ and $x$ are vertices different from $u$ and $v$, and $y,z,$ and $t$ are neighbors of $w$ that potentially could be $u$ or $v$:
    \begin{center}
        \includegraphics[width=6cm]{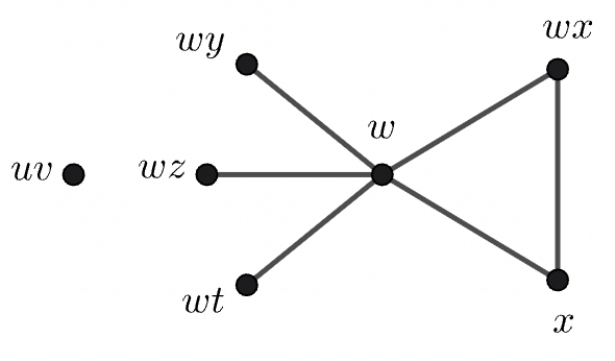}
    \end{center}
    The number of ways to do this is 
    \begin{equation}\label{eqn:subcase1.1}
        \sum_{w\ne u,v}\binom{\deg_G(w)}{4}\cdot 4 - \sum_{w\sim u}\binom{\deg_G(w)-1}{3}-\sum_{w\sim v}\binom{\deg_G(w)-1}{3},
    \end{equation}
    where we use $w\sim u$ to mean that $w$ is adjacent to $u$ in $G$. The first sum comes from choosing $w$, choosing any 4 neighbors of $w$, and then choosing one of those 4 neighbors to be $x$. The subtracted sums come from the fact that we need to subtract cases where $x=u$ or $x=v$, in which case $w$ must be a neighbor of $u$ (or $v$), and $y,z,$ and $t$ can be any other neighbors of $w$. But (\ref{eqn:subcase1.1}) stays the same if we replace $u$ with $u'$ and $v$ with $v'$ since there are automorphisms taking $u$ to $u'$ and $v$ to $v'$, so the number of copies of $H_2$ in this case is the same for $\tn{sp}(G\cup uv)$ as for $\tn{sp}(G\cup u'v')$.\\
    \\
    \noindent\textbf{Subcase 1.2.} $uv$ or $u'v'$ is $v_2$.\\
    \\
    \indent In this case, $v_5$ must be $u$ or $v$ in $\tn{sp}(G\cup uv)$, so without loss of generality assume $v_5 = v$. Then we get the following picture, where $x$ is a neighbor of $u$ and $e$ is an edge not incident to $u$ or $x$:
    \begin{center}
        \includegraphics[width=6cm]{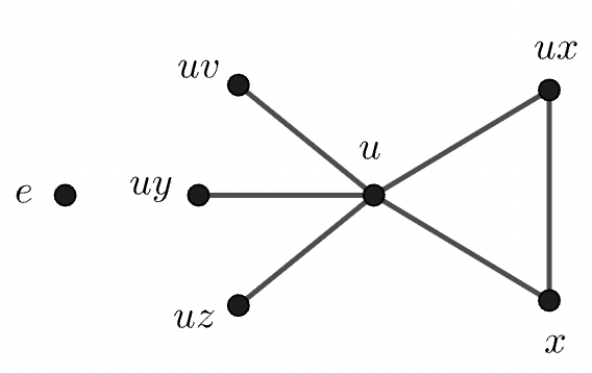}
    \end{center}
    The number of ways to do this is $$\sum_{x\sim u}\binom{\deg_G(u)-1}{2}\cdot(|E|-\deg_G(u)-\deg_G(x)+1),$$ because once $x$ is chosen, there are $\binom{\deg_G(u)-1}2$ ways to choose the other two neighbors $y$ and $z$ of $u$, and then there are $|E|-\deg_G(u)-\deg_G(x)+1$ ways to choose the other edge $e$ because the only restriction is that it cannot be incident to $u$ or $x$ (and the $+1$ comes from inclusion-exclusion, since the edge $ux$ got subtracted twice and must be added back). This expression stays the same if we replace $u$ with $u'$, so we get the same number of copies of $H_2$ in $\tn{sp}(G\cup uv)$ with $v_5=u$ as we do in $\tn{sp}(G\cup u'v')$ with $v_5=u'$, and by analogous reasoning, we also get the same number of copies with $v_5 = v$ or $v'$.\\
    \\
    \textbf{Subcase 1.3.} $uv$ or $u'v'$ is $v_6$.\\
    \\
    \indent If $v_6 = uv$ in $\tn{sp}(G\cup uv)$, then $v_5$ and $v_7$ must be $u$ and $v$ in some order, so without loss of generality assume $v_5 = u$ and $v_7 = v$. Then we need to choose $v_3,v_3,$ and $v_4$ to be three edges incident to $u$, and $v_1$ to be any edge incident to neither $u$ nor $v$, as shown below:
    \begin{center}
        \includegraphics[width=6cm]{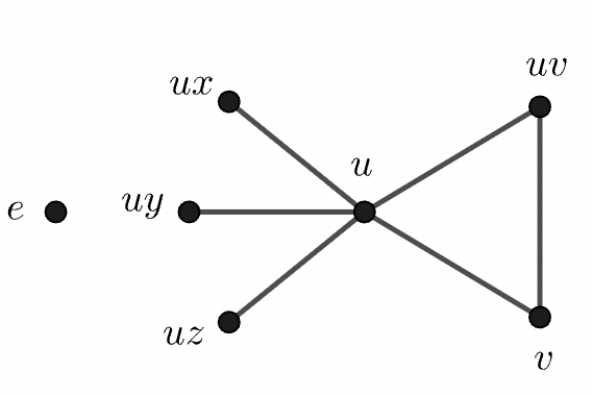}
    \end{center}
    The number of ways to do this is $$\binom{\deg_G(u)}{3}\cdot(|E|-\deg_G(u)-\deg_G(v)).$$ That expression stays the same if we swap $u$ with $u'$ and $v$ with $v'$, and so does the analogous expression with $u$ and $v$ interchanged, so we again get the same number of copies of $H_2$ in $\tn{sp}(G\cup uv)$ as in $\tn{sp}(G\cup u'v')$ in this case.\\
    \\
    \textbf{Case 2.} Both $v_6$ and $v_7$ come from vertices.\\
    \\
    \indent This is the interesting case where we will show that there are more copies of $H_2$ in $\tn{sp}(G\cup uv)$ than in $\tn{sp}(G\cup u'v')$. The figure below shows which vertices of $H_2$ come from vertices and which ones come from edges:
    \begin{center}    
    \includegraphics[width=7cm]{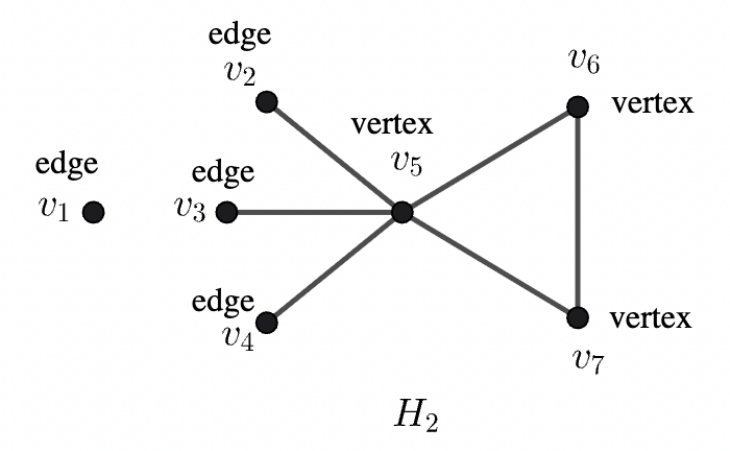}
    \end{center}
    We again may assume $uv$ or $u'v'$ appears in the copy of $H_2$. This time we need two subcases based on the position of $uv$ or $u'v'$. \\
    \\
    \textbf{Subcase 2.1.} $uv$ of $u'v'$ is the lone vertex $v_1$.\\
    \\
    \indent In this case, we get the picture below, where $w\ne u,v$ is a vertex, $wx,wy$ and $wz$ are edges incident to $w$, and $s,t\ne u,v,w,x,y,z$ are two other vertices:
    \begin{center}
        \includegraphics[width=6cm]{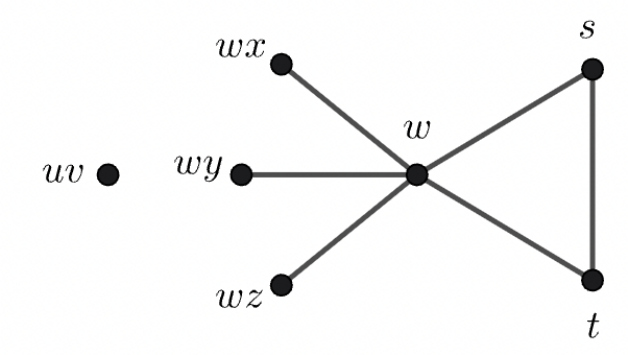}
    \end{center}
    We can start by saying that there are $\sum_{w\ne u,v}\binom{\deg_G(w)}{3}\cdot \binom{|V|-6}{2}$ ways to do this, since we can choose any three edges incident to $w$ for $wx,wy,$ and $wz$, and then $s$ and $t$ can be any two vertices that are not $u,v,w,x,y,$ or $z$. 
    
    The issue is that if one or more of $x,y,$ or $z$ is equal to $u$ or $v$, then we actually have more choices for $s$ and $t$ because fewer vertices are not allowed. In particular, if $w\sim u$ and $x=u$, then there are $\binom{\deg_G(w)-1}2$ ways to choose $y$ and $z$ and then $\binom{|V|-5}2$ ways to choose $s$ and $t$, since only the five vertices $u,v,w,y,z$ are not allowed. However, we already counted $\binom{|V|-6}2$ of these choices of $s$ and $t$ for each choice of $w,y,$ and $z$, so the number of copies of $H_2$ we need to add for each choice of $w,y,$ and $z$ is $\binom{|V|-5}2-\binom{|V|-6}2=|V|-6,$ or $\sum_{w\sim u}\binom{\deg_G(w)-1}2(|V|-6)$ in total. Similarly, we need to add $\binom{\deg_G(w)-1}2\cdot(|V|-6)$ for each $w\sim v$ to account for the extra choices of $s$ and $t$ when $x=v.$ 
    
    Finally, if $w\sim u,v$ is a common neighbor of $u$ and $v$, we could have $x=u$ and also $y=v$. In that case there are $\deg_G(w)-2$ ways to choose the final neighbor $z$ of $w$. Then the restriction is $s,t\ne u,v,w,z$, so there should be $\binom{|V|-4}2$ choices for $s$ and $t$. However, for each choice of $w$ and $y$, we have already counted $\binom{|V|-6}2+2(|V|-6)=\binom{|V|-4}2-1$ of these choices of $s$ and $t$, so there is just one additional choice of $s$ and $t$ that needs to be added. Putting all this together gives 
    \begin{align*}
        \sum_{w\ne u,v}\binom{\deg_G(w)}3 \binom{|V|-6}2 \\
        + \sum_{w\sim u}\binom{\deg_G(w)-1}2(|V|-6)+\sum_{w\sim v}\binom{\deg_G(w)-1}2(|V|-6) \\+\sum_{w\sim u,v}(\deg_G(w)-2).
    \end{align*}
    The first two lines stay the same if we swap $u$ for $u'$ and $v$ for $v'$, but the last line is greater when we use $u$ and $v$ than when we use $u'$ and $v'$, since $u$ and $v$ have at least one common neighbor but $u'$ and $v'$ do not. Thus, overall, there are more copies of $H_2$ in $\tn{sp}(G\cup uv)$ than in $\tn{sp}(G\cup u'v')$ in this case as long as there is at least one $w\sim u,v$ with $\deg_G(w)\ge 3$, and otherwise there are the same number of copies in both split graphs.\\
    \\
    \textbf{Subcase 2.2.} $uv$ or $u'v'$ is $v_2$.\\
    \\
    \indent If $v_2 = uv$, then $v_5 = u$ or $v_5 = v$, so without loss of generality assume $v_5 = u.$ Then $v_1$ must be some edge $wx$ not incident to $u$, $v_3$ and $v_4$ must be two edges $uy$ and $uz$, and $v_6$ and $v_7$ must be two other vertices $s$ and $t$:
    \begin{center}
        \includegraphics[width=6cm]{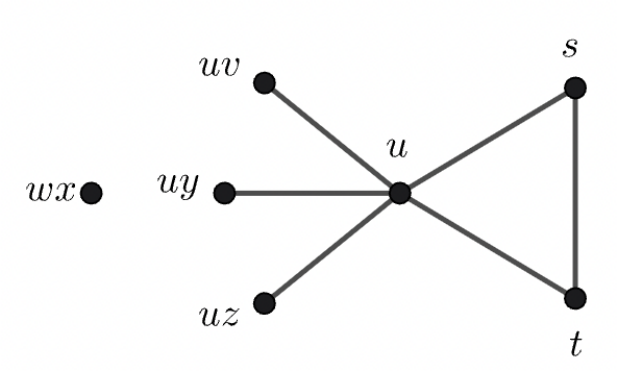}
    \end{center}
    There are $\binom{\deg_G(u)}2$ ways to choose $y$ and $z$, then $|E|-\deg_G(u)$ ways to choose $wx$ as an edge not incident to $u$, and $\binom{|V|-6}2$ ways to choose $s$ and $t$ as two vertices not equal to $u,v,w,x,y,$ or $z$. This gives $\binom{\deg_G(u)}2(|E|-\deg_G(u))\binom{|V|-6}2$ as our starting point.
    
    However, if $w$ or $x$ equals $v,y,$ or $z$, there are actually more options for $s$ and $t$, since there are fewer vertices which are not allowed. 
    
    The number of ways to have $w=y$ is $\sum_{y\sim u}(\deg_G(y)-1)(\deg_G(u)-1),$ since after choosing $y$ we must choose $wx=yx$ to be an edge incident to $u$ that is not $uy,$ and we need to choose $z$ to be a neighbor of $u$. Then there are $\binom{|V|-5}2$ choices for $s$ and $t$ since they cannot be $u,v,w=y,x,$ or $z$, but $\binom{|V|-6}2$ of those choices for $s$ and $t$ have already been counted, so we need to add $\binom{|V|-5}2-\binom{|V|-6}2 = |V|-6$ more choices for $s$ and $t$ for each choice of $u,v,w=y,x,$ and $z$. Thus in total, we need to add $\sum_{y\sim u}(\deg_G(y)-1)(\deg_G(u)-1)(|V|-6)$ to our starting sum.

    Next, for each case with $w=y$ and also $x=z$, we can choose $s$ and $t$ to be any of the $|V|-4$ vertices that are not $u,v,w,$ or $x,$ so the number of additional options for $s$ and $t$ not yet counted is $\binom{|V|-4}2 - \binom{|V|-6}2 - 2(|V|-6) = 1$, since we started with $\binom{|V|-6}2$ and then we already added $|V|-6$ once because $w=y$ and a second time because $x=z.$ Thus, we need to add 1 for each triangle $uyz = uwx$.

    Now if $w=v,$ there are $\binom{\deg_G(u)}2$ ways to choose the two neighbors $y$ and $z$ of $u$, $\deg_G(v)$ ways to choose the edge $wx = vx$, and $\binom{|V|-5}2 - \binom{|V|-6}2 = |V|-6$ ways to choose $s$ and $t$ in addition to the ones already counted. So, we need to add $\binom{\deg_G(u)}2\deg_G(v)(|V|-6)$ to our sum.

    Finally, if both $w=v$ and $y=x,$ then $y=x$ must be a common neighbor of $u$ and $v.$ There are then $\deg_G(u)-1$ ways to choose the other edge $uz$ incident to $u$, and then $\binom{|V|-4}2$ ways to choose $s$ and $t$ as two vertices different from $u,v,x,$ or $z$. Since $\binom{|V|-6}2 +2(|V|-6)$ of these choices of $s$ and $t$ have already been counted for each choice of $u,v,x,$ and $z$, the amount we need to add to our sum is $\sum_{x\sim u,v}(\deg_G(u)-1).$

    Putting all this together, the number of copies of $H_2$ in $\tn{sp}(G\cup uv)$ for Subcase 2.2 that have $v_5 = u$ is 
    \begin{align*}
        \binom{\deg_G(u)}2(|E|-\deg_G(u))\binom{|V|-6}2 \\
        + \sum_{y\sim u}(\deg_G(y)-1)(\deg_G(u)-1)(|V|-6) \\
        +(\#\tn{ of triangles containing }u\tn{ in }G) \\
        + \binom{\deg_G(u)}2\deg_G(v)(|V|-6)\\
        + \sum_{x\sim u,v}(\deg_G(u)-1).
    \end{align*}
    All of the above calculations work as long as $|V|\ge 6,$ and all of the above lines stay the same if we swap $u$ for $u'$ and $v$ for $v'$ except the last one, which is larger for $\tn{sp}(G\cup uv)$ than for $\tn{sp}(G\cup u'v')$ as long as $\deg_G(u) > 1$. Similarly, if we let $v_5 = v$ instead, our last line will become $\sum_{x\sim u,v}(\deg_G(v)-1)$, so it will be larger for $\tn{sp}(G\cup uv)$ as long as $\deg_G(v)>1$. Thus, there are more copies of $H_2$ in $\tn{sp}(G\cup uv)$ than in $\tn{sp}(G\cup u'v')$ in this subcase as long as $|V|\ge 6$ and either $\deg_G(u)>1$ or $\deg_G(v)>1$.

    \bigskip

    Combining our cases, we see that as long as $|V|\ge 6$, $u$ and $v$ have at least one common neighbor while $u'$ and $v'$ do not, and either $\deg_G(u)>1$, $\deg_G(v)>1$, or one of the common neighbors of $u$ and $v$ has degree at least 3, there will be more copies of $H_2$ in $\tn{sp}(G\cup uv)$ than in $\tn{sp}(G\cup u'v')$.
    \end{proof}

    Combining Lemmas \ref{lem:H_1} and \ref{lem:H_2} completes the proof of Theorem \ref{thm:split_graphs} and shows that $\ol{X}_{\tn{sp}(G\cup uv)}\ne \ol{X}_{\tn{sp}(G\cup u'v')}.$
\end{proof}

\printbibliography

\end{document}